\newtheorem{Theorem}{Theorem}[section]
\newtheorem{lemma}[Theorem]{Lemma}
\newtheorem{proposition}[Theorem]{Proposition}
\newtheorem{corollary}[Theorem]{Corollary}
\newtheorem{definition}[Theorem]{Definition}
\newtheorem{remark}[Theorem]{Remark}
\newtheorem{example}[Theorem]{Example}
\newcommand{\RR}{\mathbb {R}}
\newcommand{\PP}{\mathbb {P}}
\newcommand{\tr}{\mathop{\rm{trunk}}}
\newcommand{\K}{\mathcal{K}}
\newcommand{\spin}{\mathcal{S}}
\newcommand{\gr}{\mathop{\textrm{grad}}}
\newcommand{\lay}{\mathop{\rm{lay}}}
\providecommand{\keywords}[1]
{
  \small	
  \textbf{Keywords :} #1
}
\begin{document}

\title{Flattening knotted surfaces}

\author{Eva Horvat}
\address{University of Ljubljana, Faculty of Education, Kardeljeva plo\v s\v cad 16, 1000 Ljubljana, Slovenia, eva.horvat@pef.uni-lj.si}
\subjclass[2020]{57K45}

\maketitle

\begin{abstract}
A knotted surface in $S^{4}$ may be described by means of a hyperbolic diagram that captures the 0-section of a special Morse function, called a hyperbolic decomposition. We show that every hyperbolic decomposition of a knotted surface $\K $ defines a projection of $\K $ onto a 2-sphere $\Sigma $, whose set of critical values is the hyperbolic diagram of $\K $. We apply such projections, called flattenings, to define three invariants of knotted surfaces: the layering, the trunk, and the partition number. The basic properties of flattenings and their derived invariants are obtained. Our construction is used to study flattenings of satellite 2-knots. 
\end{abstract}

\keywords{}

\section {Introduction}
\label{sec1}

Width of knotted surfaces was first considered by Carter and Saito \cite{CS} in the context of charts. A chart of a knotted surface $\K $ in $\RR ^{4}$ is obtained by taking a projection $p\colon \RR ^{4}\to \RR ^{3}$ that is generic with respect to $\K $, and then projecting the singular set of the associated immersion $p(\K )$ onto a 2-plane. Takeda studied the width of surface knots using generic planar projections of embedded surfaces and considering possible images of such projections that consist of fold curves and cusp points \cite{TA}. \\

In this paper, we study width--related invariants of embedded surfaces from a different perspective. Instead of considering generic projections, we apply hyperbolic decompositions of a knotted surface $\K $ to define its layering, trunk and partition number. We show that each hyperbolic decomposition of $\K $ induces a projection of $\K $ to a 2-sphere, and the set of critical values of this projection may be identified with a $ch$-diagram of the surface $\K $, defined by Yoshikawa \cite{YO}. Such diagrams and closely related banded link diagrams are often used in the study of surfaces inside 4-manifolds, thus we hope our description might lead to new results concerning width and related invariants of knotted surfaces. \\

The idea behind our invariants comes from the classical knot theory. The bridge number of classical knots was first considered by Schubert \cite{SC1}, while the more general notion of width was defined by Gabai \cite{GA}. The trunk of 1-knots was defined by Ozawa \cite{OZ}.  
All three invariants might be interpreted in terms of Morse functions as follows \cite{ZU}. Let $K$ be a knot in the 3-sphere $S^3$. Denote by $\mathcal{M}(K)$ the collection of all Morse functions $h\colon S^3\to \RR $ with exactly two critical points, such that the restriction $h|_{K}$ is also Morse. Given a function $h\in \mathcal{M}(K)$, denote by $c_{0}<c_1<\ldots <c_n$ the critical values of $h|_{K}$ and choose regular values $r_i\in (c_{i-1},c_i)$ for $i=1,2,\ldots ,n$. Each function $h$ defines three values 
\begin{xalignat*}{1}
& w(h)=\sum _{i=1}^{n}|K\cap h^{-1}(r_{i})|\;,\quad b(h)=\# \textrm{ of maxima of $h|_{K}$}\;,\quad \tr(h)=\max _{1\leq i\leq n}|K\cap h^{-1}(r_{i})|
\end{xalignat*}
that give rise to three knot invariants: the width $w(K)=\min _{h\in \mathcal{M}(K)}w(h)$, the bridge number $b(K)=\min _{h\in \mathcal{M}(K)}b(h)$ and the trunk of a knot: $\tr (K)=\min _{h\in \mathcal{M}(K)}\tr (h)$. \\

Our aim is to apply a similar construction one dimension higher. First we need to find a suitable family of functions that will do the trick for knotted surfaces. To describe the general setting, we use the following standard results.  

\begin{Theorem} \cite{LEE} \label{th1} Let $M$ and $N$ be smooth manifolds, and let $f\colon M\to N$ be a smooth map with constant rank $k$. Each level set of $f$ is a closed embedded submanifold of codimension $k$ in $M$. 
\end{Theorem}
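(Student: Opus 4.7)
The plan is to deduce this directly from the local normal form for constant-rank maps (the Rank Theorem), which is the key technical ingredient. Given $f\colon M\to N$ of constant rank $k$, for every point $p\in M$ the Rank Theorem provides smooth charts $(U,\varphi)$ around $p$ and $(V,\psi)$ around $f(p)$ such that $\varphi(p)=0$, $\psi(f(p))=0$, and the coordinate representation $\psi\circ f\circ\varphi^{-1}$ has the normal form
\[
(x_{1},\ldots,x_{k},x_{k+1},\ldots,x_{m})\longmapsto (x_{1},\ldots,x_{k},0,\ldots,0).
\]
I would take this statement as given, since it is the content of the chapter of \cite{LEE} being cited.

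Fix a point $q\in N$ and consider the level set $L=f^{-1}(q)$. If $L$ is empty there is nothing to prove, so pick any $p\in L$ and apply the Rank Theorem at $p$. In the chart $(U,\varphi)$ described above, a point $x\in U$ lies in $L$ if and only if its first $k$ coordinates vanish, because $\psi(q)=0$ in the target chart. Hence $\varphi(L\cap U)=\varphi(U)\cap(\{0\}\times\RR^{m-k})$, which is exactly the defining property of a slice chart of codimension $k$. Since such slice charts can be produced around every point of $L$, the Local Slice Criterion for embedded submanifolds shows that $L$ is an embedded submanifold of $M$ of codimension $k$.

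Finally, $L$ is closed in $M$: the singleton $\{q\}$ is closed in the manifold (hence Hausdorff space) $N$, and $f$ is continuous, so $L=f^{-1}(\{q\})$ is closed. Combining the two conclusions, $L$ is a closed embedded submanifold of codimension $k$, as required.

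The only genuine step here is the appeal to the Rank Theorem; the rest is bookkeeping in coordinates and a one-line topological argument for closedness. There is no real obstacle, which matches the role of the statement in the paper: it is quoted as a standard tool to justify later the manifold structure on preimages of regular-like values of the Morse-type functions used to define hyperbolic decompositions.
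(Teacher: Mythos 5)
Your argument is correct and is essentially the standard proof of the Constant-Rank Level Set Theorem as it appears in the cited reference \cite{LEE}: the Rank Theorem gives the local normal form, the resulting charts are slice charts for the level set, and closedness follows from continuity of $f$ and the fact that points of $N$ are closed. The paper itself offers no proof (it quotes the result as a standard tool), so there is nothing further to compare; your treatment fills that role correctly.
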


\begin{lemma}[Ehresmann fibration lemma \cite{EH}] \label{lemma1} Let $M$ and $N$ be smooth manifolds, and let $f\colon M\to N$ be a proper submersion. Then $M$ is a fiber bundle over $N$ with projection given by $f$. 
\end{lemma}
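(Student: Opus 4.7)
The plan is to prove local triviality by spreading a single fiber across a coordinate chart using flows of lifted vector fields. Fix $q\in N$ and a chart $\varphi \colon U\to (-\epsilon ,\epsilon )^{n}$ with $\varphi (q)=0$. I will produce a diffeomorphism $\Psi \colon U\times f^{-1}(q)\to f^{-1}(U)$ commuting with the projections, which furnishes the fiber bundle structure.

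First I would lift the coordinate vector fields. For each $i$, let $\partial_{i}$ denote the $i$-th coordinate vector field on $U$. Because $f$ is a submersion, near every point $x\in f^{-1}(U)$ the differential $df_{x}$ is surjective, so there is a local vector field $X_{i}^{(\alpha )}$ defined on some neighborhood $V_{\alpha }\subset f^{-1}(U)$ that is $f$-related to $\partial_{i}$, i.e.\ $df(X_{i}^{(\alpha )})=\partial_{i}\circ f$. Choosing a locally finite cover $\{V_{\alpha }\}$ of $f^{-1}(U)$ and a subordinate partition of unity $\{\rho _{\alpha }\}$, set $X_{i}=\sum _{\alpha }\rho _{\alpha }X_{i}^{(\alpha )}$; since $f$-relatedness is preserved under convex combinations, $X_{i}$ is a global smooth vector field on $f^{-1}(U)$ satisfying $df(X_{i})=\partial_{i}\circ f$.

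Next I would use properness to ensure the flows of $X_{1},\ldots ,X_{n}$ exist long enough. An integral curve of $X_{i}$ through $x\in f^{-1}(U)$ projects under $f$ to an integral curve of $\partial_{i}$ in $U$, which is well defined for all time for which it stays inside $U$. If such an integral curve $\gamma (t)$ of $X_{i}$ were to escape every compact subset of $f^{-1}(U)$ before time $T$ while its projection $f\circ \gamma $ remained in a compact set $K\subset U$, then $\gamma $ would eventually leave the preimage $f^{-1}(K)$, which is compact by properness of $f$; this contradiction shows the flow $\phi_{i}^{t}$ of $X_{i}$ is defined on all of $f^{-1}(\varphi^{-1}(\{x\in (-\epsilon ,\epsilon )^{n}:|x_{i}+t|<\epsilon \}))$. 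In particular, on $f^{-1}(q)$ all the flows $\phi_{i}^{t_{i}}$ are defined for $(t_{1},\ldots ,t_{n})\in (-\epsilon ,\epsilon )^{n}$.

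Finally, I would define
\begin{equation*}
\Psi (t_{1},\ldots ,t_{n},x)=\phi_{1}^{t_{1}}\circ \phi_{2}^{t_{2}}\circ \cdots \circ \phi_{n}^{t_{n}}(x)
\end{equation*}
for $x\in f^{-1}(q)$ and $(t_{1},\ldots ,t_{n})\in (-\epsilon ,\epsilon )^{n}\cong U$. Because $df(X_{i})=\partial_{i}\circ f$, the composition $\varphi \circ f\circ \Psi $ equals projection on the first factor, so $\Psi $ is compatible with $f$. Smoothness of $\Psi $ follows from smooth dependence of flows on initial data, and a smooth two-sided inverse is built by running the flows backward: given $y\in f^{-1}(U)$ with coordinates $\varphi (f(y))=(t_{1},\ldots ,t_{n})$, set $\Psi^{-1}(y)=(t_{1},\ldots ,t_{n},\phi_{n}^{-t_{n}}\circ \cdots \circ \phi_{1}^{-t_{1}}(y))$, where by Theorem \ref{th1} the point $\phi_{n}^{-t_{n}}\circ \cdots \circ \phi_{1}^{-t_{1}}(y)$ lies in the embedded submanifold $f^{-1}(q)$. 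The anticipated main obstacle is the completeness step: a submersion without properness need not be a fiber bundle (one can puncture the total space), so properness must be used in exactly the way above to force integral curves to remain in compacta and thereby extend to all required parameter values.
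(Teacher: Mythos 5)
The paper does not prove this statement at all: it is quoted as a classical result with a citation to Ehresmann, so there is no in-paper argument to compare against. Your proposal is the standard textbook proof of Ehresmann's theorem and is essentially correct: locally lifting the coordinate fields $\partial_i$ using the submersion normal form, gluing with a partition of unity (noting $f$-relatedness survives convex combinations), invoking properness to trap integral curves in the compact sets $f^{-1}(K)$ so that the flows exist for all parameter values keeping the projection inside the chart, and then composing the flows $\phi_1^{t_1}\circ\cdots\circ\phi_n^{t_n}$ to produce the local trivialization with the explicit reverse-order inverse. The only place where the write-up is loose is the completeness step: the statement that an integral curve defined only up to a finite time $T$ must ``escape every compact subset'' is the escape lemma for maximal integral curves, which you use implicitly without naming it; once that is made explicit, the contradiction with the curve being confined to the compact set $f^{-1}(K)$ is exactly right. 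It would also be worth one sentence observing that the flows in the composition are applied in a fixed order precisely because the lifted fields $X_i$ need not commute, while their projections do, which is why $\varphi\circ f\circ\Psi$ is still the coordinate projection.
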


\begin{corollary} \label{cor1} Let $\K$ be a smoothly embedded surface in a smooth 4-manifold $X$ (the embedding being proper at the boundary if necessary), and let $\Sigma $ be a 2-manifold. Consider a smooth map $f\colon X\to \Sigma $  and denote by $A\subset \K$ the set of critical points of $f|_{\K}$. Then cardinality $|\K \cap f^{-1}(x)|$ is constant on each connected component of $\Sigma \backslash f(A)$.  
\end{corollary}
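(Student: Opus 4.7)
The plan is to combine Theorem \ref{th1} and Lemma \ref{lemma1}, applied to the restriction $f|_\mathcal{K}$ over each connected component of $\Sigma\setminus f(A)$. The key observation is that on $\mathcal{K}\setminus A$ the map $f|_\mathcal{K}$ is forced to have constant rank equal to $\dim\Sigma=2$: at any non-critical point $p$, the differential $d(f|_\mathcal{K})_p\colon T_p\mathcal{K}\to T_{f(p)}\Sigma$ has maximal possible rank $\min(2,2)=2$, so $f|_{\mathcal{K}\setminus A}$ is a smooth submersion.

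Fixing a connected component $U$ of $\Sigma\setminus f(A)$, I would set $V:=\mathcal{K}\cap f^{-1}(U)$. Since $U$ is disjoint from $f(A)$, we have $V\subset \mathcal{K}\setminus A$, and $f|_V\colon V\to U$ is a constant rank-$2$ map between $2$-manifolds. Theorem \ref{th1} then tells us that every level set $\mathcal{K}\cap f^{-1}(x)$ for $x\in U$ is a closed embedded $0$-dimensional submanifold of $V$.

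To invoke Ehresmann and conclude that $f|_V$ is a fiber bundle, I still need $f|_V$ to be proper; this is the main point to check. Using the hypothesis that $\mathcal{K}$ is properly embedded and the fact that any compact $C\subset U$ is separated from the critical value set $f(A)$, one verifies that $(f|_V)^{-1}(C)=\mathcal{K}\cap f^{-1}(C)$ is a closed subset of $\mathcal{K}$ whose accumulation points can only lie in fibers over $C\subset U$ itself, so it is compact. In the motivating case of a closed knotted surface this is automatic.

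Once properness is in hand, Lemma \ref{lemma1} applies and identifies $f|_V\colon V\to U$ with a locally trivial fiber bundle whose fiber is a compact $0$-manifold, hence a finite set. Connectedness of $U$ then forces the cardinality $|\mathcal{K}\cap f^{-1}(x)|$ to be constant across $U$, which is exactly the claim. The only non-routine step is the properness verification; the rest is a direct application of the two cited results.
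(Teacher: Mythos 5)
Your proposal follows essentially the same route as the paper: apply Theorem \ref{th1} to see that regular fibers are discrete, then Lemma \ref{lemma1} (Ehresmann) to get a fiber bundle structure away from the critical values, whence the fiber cardinality is constant on each connected component. Your restriction to $V=\K\cap f^{-1}(U)$ with an explicit properness check is in fact a slightly more careful implementation than the paper's, which applies the fibration lemma directly to $f|_{\K\backslash A}$ without dwelling on properness.
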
 
\begin{proof}
For each regular value of $f|_{\K}$, the fiber is a finite discrete set of points by Theorem \ref{th1}. Moreover, the restriction $f_{\K\backslash A}\colon \K\backslash A\to f(\K\backslash A)\subset \Sigma $ is a fiber bundle by Lemma \ref{lemma1}, so the cardinality of its fibers is constant on each connected component of $\Sigma \backslash f(A)$. 
\end{proof}

Under the setting described in Corollary \ref{cor1}, denote by $U_0,U_1,\ldots ,U_n$ the connected components of $\Sigma \backslash f(A)$ and choose regular values $r_{i}\in U_i$ for $i=0,1,\ldots ,n$. The Corollary implies that the values
\begin{xalignat*}{1}
& \lay (f)=\sum _{i=1}^{n}|\K \cap f^{-1}(r_{i})|\;,\quad p(f)=n\;,\quad  \tr(f)=\max |\K \cap f^{-1}(r_{i})|
\end{xalignat*}
are well defined. Summing up over a suitable collection of maps (or fixing a map and summing up over all equivalent embeddings of $\K$ in $X$), we might arrive at three invariants of knotted surfaces inside $X$. \\

The paper is organized as follows. Section \ref{sec2} contains the basic material about presentations of knotted surfaces that we will need. In Section \ref{sec3}, we present a projection of an embedded surface $\K $ called a flattening, which is associated to a marked graph diagram of $\K $. We discuss its critical values and introduce the terminology needed to describe a flattening. Multiplicities of the flattening map and their basic properties are discussed. In Section \ref{sec4}, we define three invariants of knotted surfaces based on flattenings: the layering, the trunk and the partition number. Some basic results regarding these invariants are obtained. In Subsection \ref{subs41}, we study flattenings and the associated invariants of satellite 2-knots. Subsection \ref{subs42} concludes the paper by offering several ideas for further study. 

\section*{Acknowledgements}

The author would like to thank the anonymous reviewer for helpful comments and suggestions that helped to improve the exposition of the paper and clarify its connections with related studies. The author was supported by the Slovenian Research Agency grants P1-0292 and N1-0083.

\section{Preliminaries}
\label{sec2}

For the remainder of this paper, we restrict our attention to embedded surfaces in $S^4$. Throughout the paper, we denote by $\K $ a smoothly embedded, connected closed surface in $S^4$. In this Section we briefly recall the basic descriptions of embedded surfaces that we will work with. A good introductory overview of the subject is offered in \cite{CS1}. 

\begin{definition} \label{def1} A Morse function $h\colon S^{4}\to \RR $ is called a \textbf{hyperbolic splitting} of an embedded surface $\K$ if it satisfies the following conditions:
\begin{enumerate}
\item $h$ has exactly two critical points on $S^4$,
\item $h_{\K}$ is also Morse,
\item all minima of $h_{\K}$ occur in the level $h^{-1}(-1)$,
\item all maxima of $h_{\K}$ occur in the level $h^{-1}(1)$,
\item all hyperbolic points of $h_{\K}$ occur in the level $h^{-1}(0)$.
\end{enumerate}
\end{definition}
It is well known that every embedded surface in $S^4$ admits a hyperbolic splitting \cite{LO}. We will denote by $\K _{t}=\K \cap h^{-1}(t)$ the $t$-section of the knotted surface, induced by $h$. Similarly, we will denote $S^{4}_{t}=h^{-1}(t)$ and $S^{4}_{I}=h^{-1}(I)$ for any interval $I\subset \RR $. \\

A quite illuminating presentation of an embedded surface may be given by its movie. A \textbf{movie} of a surface $\K $ with a hyperbolic splitting $h\colon S^{4}\to \RR $ is a sequence of diagrams of sections $\K _{t}\subset S^{4}_{t}$ for $t\in [-1,1]$. See Figure \ref{fig3} for a simple movie of a projective plane. 
\begin{figure}[h!]
\labellist
\normalsize \hair 2pt
\pinlabel $t=-1-\epsilon $ at 220 540
\pinlabel $t=-\frac{2}{3}$ at 740 540
\pinlabel $t=\frac{1}{3}$ at 2260 540
\pinlabel $t=-\frac{1}{3}$ at 1240 540
\pinlabel $t=0$ at 1760 540
\pinlabel $t=\frac{2}{3}$ at 220 -30
\pinlabel $t=1+\epsilon $ at 740 -30
\endlabellist
\begin{center}
\caption{A movie of a projective plane.}
\includegraphics[scale=0.16]{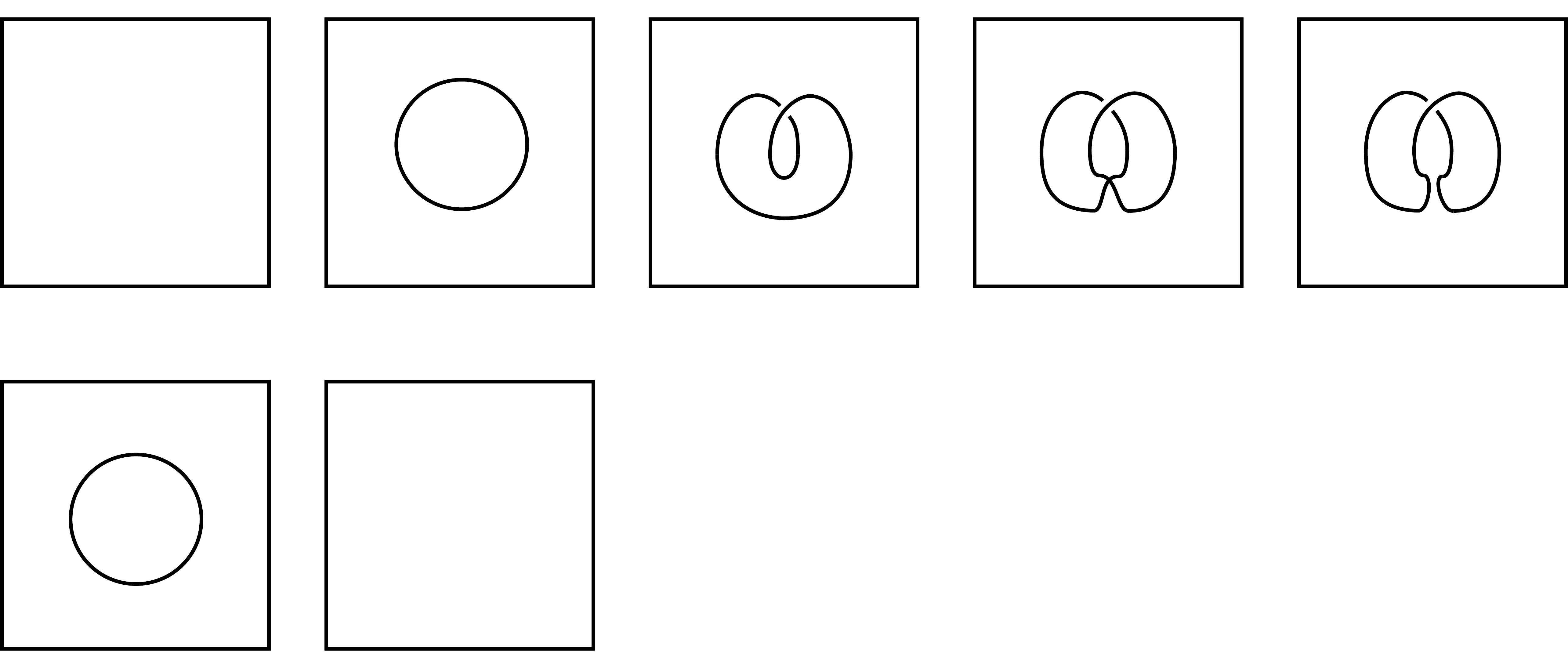}
\label{fig3}
\end{center}
\end{figure}

Instead of following the whole movie, the information about a hyperbolic splitting of a surface may be compressed in a single diagram. Such presentations have been used in the early studies of knotted surfaces (see for example \cite{LO,YO}), and they remain an important tool in the recent literature (see \cite{JA, MZ, HKM}). The basic idea is that a hyperbolic splitting, by definition, induces a handle decomposition of the embedded surface. This handle decomposition may be conveniently presented by either a marked graph or a banded link. \\

Following \cite{MZ}, we define a \textbf{band} for a link $L$ in $S^{3}$ as an embedding $b\colon I\times I\to S^{3}$ such that $b(\partial I \times I)=b\cap L$. We denote a new link $L_{b}=(L\backslash b(\partial I \times I))\cup b(I\times \partial I)$ and call it the link that results from \textbf{resolving} the band $b$. Similarly, if $b$ denotes a collection of pairwise disjoint bands for $L$, we denote by $L_{b}$ the link that results from $L$ by resolving all bands in $b$. A pair $(L,b)$ is called a \textbf{banded link} if $L$ is a link in $S^{3}$, $b$ is a band for $L$ and both $L$ and $L_b$ are unlinks. \\

Let $h\colon S^4\to \RR $ be a hyperbolic splitting of a knotted surface $\K$. It follows from De\-fi\-ni\-tion \ref{def1} that for a small positive $\epsilon $, both $\K _{-\epsilon}$ and $\K _{\epsilon }$ are unlinks. Moreover, at every hyperbolic point of $h_{\K}$, a 1-handle (a band) is added to the boundary of the 0-skeleton of $\K $ along two intervals, embedded in $\K_{-\epsilon }$. The attachement of all 1-handles changes the boundary of the resulting surface, which is obtained from $\K _{-\epsilon }$ by resolving all bands: $\K _{\epsilon }=(\K _{-\epsilon })_{b}$. Thus, the hyperbolic splitting $h$ defines a banded link $(K_{-\epsilon },b)$. \\

Conversely, given a banded link $(L,b)$, the condition that $L$ and $L_{b}$ are unlinks provides a construction of a knotted surface $\K =\K (L,b)$ as follows. View the 4-sphere as $S^{4}=S^{3}\times [-2,2]/(S^{3}\times \{-2\}, S^{3}\times \{2\})$ and let $h\colon S^{4}\to \RR $ be the projection to the second component. Define 
\begin{enumerate}
\item $\K _{-\epsilon }=L$,
\item $\K \cap S^{4}_{[-1,-\epsilon )}$ are disks, capping off every component of $L$,
\item $\K \cap S^{4}_{[-\epsilon ,0)}=L\times [-\epsilon ,0)$,
\item $\K _{0}=L\cup b$, for each component of $b$, add the band $b(I\times I)$ to $L$ along $b(\partial I\times I)$, 
\item $\K \cap S^{4}_{(0,\epsilon ]}=L_{b}\times (0,\epsilon ]$,
\item $\K \cap S^{4}_{(\epsilon ,1]}$ are disks, capping off every component of $L_{b}$.
\end{enumerate}
By \cite[Proposition 2.4]{MZ}, the disks capping off the components of $L$ and $L_b$ are unique up to isotopy. Thus, an embedded surface $\K $ with a hyperbolic splitting $h$ is completely determined by the banded link $(L,b)$, defined by $h$, since $\K =\K (L,b)$. \\

Alternatively, a hyperbolic splitting $h$ of a knotted surface $\K $ may be presented by a \textbf{marked graph diagram}. By Definition \ref{def1}, the 0-section $\K _{0}$ defines an embedded 4-valent graph, with vertices corresponding to saddles (critical points of index 1 of the Morse function $h_{\K }$). A regular projection $p\colon S^{4}_{0}\to \Sigma $ takes $\K _{0}$ to its diagram, a 4-valent graph $\Gamma =p(\K _{0})$ in the 2-sphere $\Sigma $. In this diagram, the vertices corresponding to crossings include the information about the overcrossing and undercrossing strands, while vertices corresponding to saddles are endowed with markers. A marker at a vertex determines the corresponding resolutions below and above the critical point, see Figure \ref{fig0}. We call $\Gamma $ a \textbf{marked graph diagram} of $\K $ with the hyperbolic splitting $h$. Such diagrams were introduced by Yoshikawa \cite{YO}; they are also called ch-diagrams.

\begin{figure}[h!]
\labellist
\normalsize \hair 2pt
\pinlabel $\K _{-\epsilon }$ at 150 -20
\pinlabel $\K _{0}$ at 520 -20
\pinlabel $\K {\epsilon }$ at 890 -20
\endlabellist
\begin{center}
\includegraphics[scale=0.20]{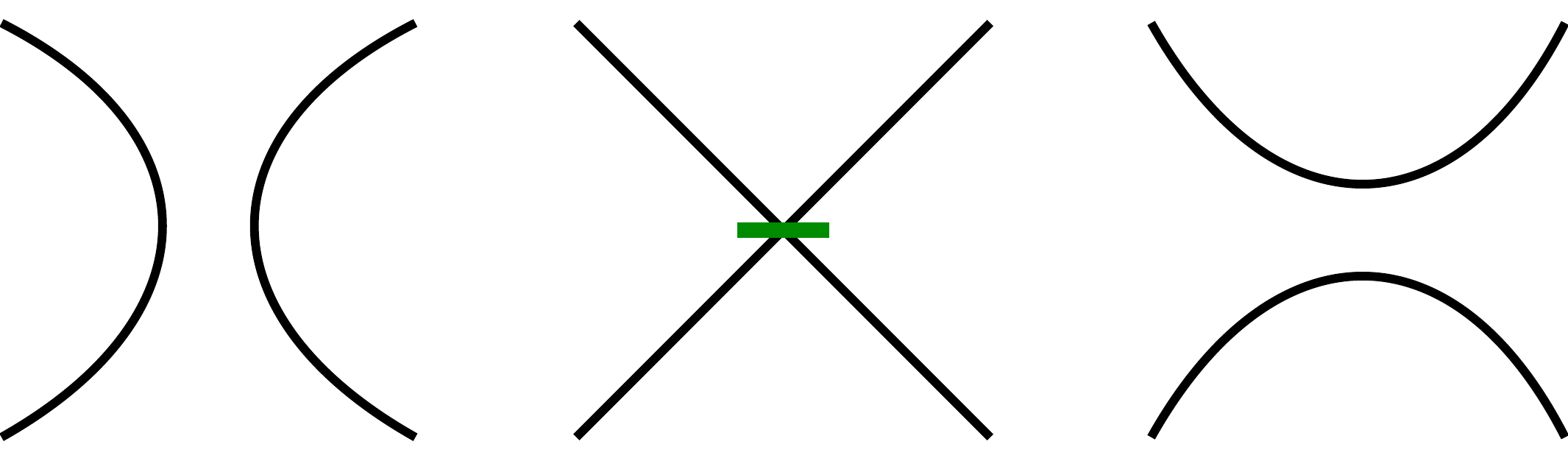}
\caption{The resolutions of a vertex, corresponding to a marker}
\label{fig0}
\end{center}
\end{figure}

By the following theorem, an embedded surface with a hyperbolic splitting is completely defined by its marked graph diagram.
\begin{Theorem}\cite{KSS} Let $\K _i$ be embedded surfaces with hyperbolic splittings $h_{i}$, and let $\Gamma _{i}$ be a marked graph diagram of $h_{i}^{-1}(0)\cap \K _{i}$ for $i=1,2$. If $\Gamma _1=\Gamma _2$, then $\K _1$ is isotopic to $\K _2$.  
\end{Theorem}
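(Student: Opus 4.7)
The plan is to reduce to the banded link classification recorded in the two paragraphs immediately preceding the theorem. If the marked graph diagram $\Gamma$ determines a unique banded link $(L,b)$ up to ambient isotopy in $S^{3}$, then by the construction recalled in the excerpt the surface $\K(L,b)$ is unique up to isotopy in $S^{4}$, so the hypothesis $\Gamma_{1}=\Gamma_{2}$ will force $\K_{1}$ and $\K_{2}$ to be built from the same banded link and hence to be isotopic.

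First I would read $(L,b)$ off $\Gamma$. Ignoring the markers, $\Gamma$ is a 4-valent graph in $\Sigma$ whose remaining vertices are ordinary crossings decorated with over/under data, so by the usual knot-theoretic reconstruction $\Gamma$ determines a spatial 4-valent graph $G\subset S^{4}_{0}\cong S^{3}$ uniquely up to ambient isotopy; by the definition of a marked graph diagram, $G$ is isotopic to $\K_{0}$. Resolving each marker vertex according to the "below" rule of Figure~\ref{fig0} then produces a link in $S^{3}$ which, by the local model of a hyperbolic point of $h_{\K}$, coincides up to isotopy with $L=\K_{-\epsilon}$; the "above" resolution similarly yields $L_{b}=\K_{\epsilon}$; and the small rectangles used to pass between the two resolutions are precisely the bands $b$. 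Conditions (3) and (4) of Definition~\ref{def1} guarantee that $L$ and $L_{b}$ bound disjoint families of embedded disks in the two polar 4-balls and so are unlinks, confirming that $(L,b)$ is genuinely a banded link.

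Combining this with the recipe preceding the theorem, $\K_{i}$ is recovered from $(L_{i},b_{i})$ by capping off $L_{i}$ and $(L_{i})_{b_{i}}$ in the two polar 4-balls, trace-attaching below and above, and gluing in $L_{i}\cup b_{i}$ at level $0$. By \cite[Proposition 2.4]{MZ} both disk systems are unique up to isotopy rel boundary, so $\K_{i}$ depends only on $(L_{i},b_{i})$, and the equality $\Gamma_{1}=\Gamma_{2}$ delivers an ambient isotopy $\K_{1}\simeq \K_{2}$. The step I expect to be the most delicate is verifying that the marker convention at each saddle agrees canonically with the local Morse normal form of a critical point of index $1$ of $h_{\K}$, so that the two resolutions of $\Gamma$ really are ambiently isotopic to $\K_{-\epsilon}$ and $\K_{\epsilon}$ in $S^{3}$ rather than merely diffeomorphic in the 4-dimensional slab. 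This can be handled by combining the Morse lemma in a neighbourhood of each hyperbolic point with the Ehresmann fibration lemma applied to $h_{\K}$ on the complement of its finite critical set, which produces the required ambient isotopies on each of the three horizontal slabs $S^{4}_{[-1,-\epsilon]}$, $S^{4}_{[-\epsilon,\epsilon]}$, and $S^{4}_{[\epsilon,1]}$.
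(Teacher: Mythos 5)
The paper does not actually prove this statement: it is imported from \cite{KSS}, and the paper's only justification is the discussion in Section \ref{sec2} preceding it, which is precisely the route you take --- the marked graph diagram determines a banded link $(L,b)$ via the local replacement of Figure \ref{fig01}, and $\K=\K(L,b)$ is determined up to isotopy by $(L,b)$ because the systems of capping disks are unique up to isotopy by \cite[Proposition 2.4]{MZ}. So your proposal follows essentially the same approach as the paper's implicit argument, and your attention to straightening the three slabs via the Morse normal form at the hyperbolic points and Ehresmann's lemma is exactly the part the paper leaves to \cite{KSS}. One step should be repaired, though: the inference that $L$ and $L_b$ ``bound disjoint families of embedded disks in the two polar 4-balls and so are unlinks'' is not valid as stated --- a link bounding disjoint embedded disks in $B^4$ is merely a (strongly) slice link and its components may be knotted. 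What makes $\K_{-\epsilon}$ and $\K_{\epsilon}$ unlinks here is the stronger Morse-theoretic fact coming from Definition \ref{def1}: each component of $\K\cap S^{4}_{[-1,-\epsilon]}$ (resp.\ $\K\cap S^{4}_{[\epsilon,1]}$) is a disk on which $h$ has a single critical point, a minimum (resp.\ maximum), so the level sets of $h$ isotope the link to a split union of small circles lying in disjoint balls. With that correction your argument matches the one the paper relies on.
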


\begin{figure}[h!]
\labellist
\normalsize \hair 2pt
\endlabellist
\begin{center}
\includegraphics[scale=0.20]{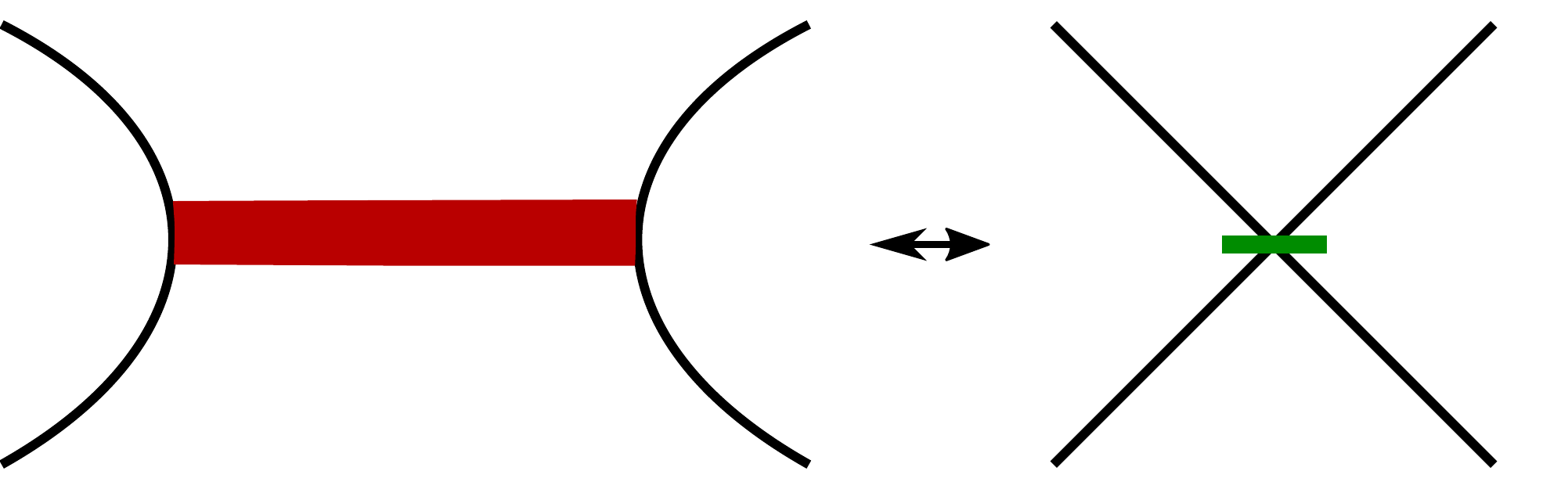}
\caption{The local modification of a banded link that results in a marked graph}
\label{fig01}
\end{center}
\end{figure}

By \cite{JA}, each banded link $(L,b)$ defines a marked graph as follows. Apply an ambient isotopy of $S^3$ to shorten the bands of $b$, until each band is contained in a small disk, then replace the neighborhood of each band with a neighborhood of a marked vertex, see Figure \ref{fig01}. We call the resulting graph the \textit{marked graph, associated with the banded link} $(L,b)$. Conversely, any marked graph $G$ in $S^3$ may be transformed into a banded link by replacing each marked vertex with a band as in Figure \ref{fig01}, and the result is called a \textit{banded link, associated with} $G$. When $G$ is given by a marked graph diagram $\Gamma $, the result of this transformation will also be called a\textit{ banded link, associated with} $\Gamma $.

\section{The flattening of an embedded surface} \label{sec3}

In this Section, we show that each marked graph diagram of an embedded surface $\K $ defines a projection of $\K $ to a 2-sphere. A description of such ``flattened surface'' provides a new perspective on its embedding. \\

To each hyperbolic splitting of a surface $\K $, one may associate two particularly nice families of isotopies of $\K $. Let $j\colon F\to S^{4}$ be a smooth embedding of a surface $\K =j(F)$. Choose a hyperbolic splitting $h$ of $\K $ and let $f\colon F\times I\to S^{4}\times I$ be a smooth isotopy of $\K $ so that $\K =f(F\times \{0\})$. Following \cite{HKM}, we say that $f$ is \textbf{horizontal} with respect to $h$ if $h(pr_1(f(x,t)))$ is independent of $t$ for all $x\in F$. We say that $f$ is \textbf{vertical} with respect to $h$ if for each $x\in F$, the image of $\{x\}\times I$ under $pr_{1}\circ f$ is contained in a single orbit of the flow of $\gr (h)$. Thus, a horizontal isotopy of $\K $ moves $\K _{t}$ within $S^{4}_{t}$, preserving $h_{\K }$. A vertical isotopy of $\K $ changes $h_{\K }$, but preserves the projection of $\K $ onto each level set $S^{4}_{t}$. \\

Let $\Gamma $ be a marked graph diagram of an embedded surface $\mathcal{K}$ in $S^{4}$. It follows from our discussion in Section \ref{sec2} that $\Gamma $ determines a hyperbolic splitting $h\colon S^{4}\to \RR $ of $\mathcal{K}$. Denote by $c(h)$ the two critical points of $h$, and let $v\subset \K _0$ denote the union of all hyperbolic points of $h_{\K }$. Let $p\colon S^{4}_{0}\to \Sigma $ be the projection to a 2-sphere $\Sigma $ which is regular on $\K _{0}\backslash v$ and for which $p(\K _{0})=\Gamma $. Define a projection $h^{\perp} \colon S^4\backslash c(h)\to \Sigma $ as follows. Denote by $\Phi \colon \RR \times S^{4}\to S^{4}$ the flow of the vector field $\gr (h)$. For any point $x\in S^4\backslash c(h)$, set $$h^{\perp}(x)=p\left (\Phi (t,x)\cap S^{4}_{0}\right )\;.$$ In other words, the flow line of the vector field $\gr(h)$ running through a point $x\in S^{4}\backslash c(h)$ intersects the 0-section $S^{4}_{0}$ in a single point; projection of this point onto $\Sigma $ is the image $h^{\perp}(x)$. \\

\begin{lemma} \label{lemma2} The map $h^{\perp}\colon  S^{4}\backslash c(h)\to \Sigma $ is smooth. 
\end{lemma}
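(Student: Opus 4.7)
The plan is to factor $h^{\perp}$ through the 0-section. Write $h^{\perp}=p\circ \pi$, where $\pi\colon S^{4}\backslash c(h)\to S^{4}_{0}$ sends $x$ to the unique intersection of the $\gr(h)$-flow line through $x$ with $S^{4}_{0}$. Since $p$ is smooth by hypothesis, it suffices to prove that $\pi$ is smooth. This reduces the problem to controlling the time at which the flow line through $x$ crosses the zero level.

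First I would observe that, by condition (1) of Definition \ref{def1}, the two critical points of $h$ on $S^{4}$ are the absolute minimum (attained on $h^{-1}(-1)$) and absolute maximum (attained on $h^{-1}(1)$), so $\gr(h)$ is nowhere vanishing on $S^{4}\backslash c(h)$. The flow $\Phi$ is therefore a smooth $\RR$-action on this open set, and because $S^{4}$ is compact and $h$ has only these two critical points, every orbit in $S^{4}\backslash c(h)$ is a trajectory running from the minimum to the maximum and thus meets $S^{4}_{0}$ in exactly one point. Hence the function $\tau\colon S^{4}\backslash c(h)\to \RR$ defined implicitly by $h(\Phi(\tau(x),x))=0$ is well defined, and $\pi(x)=\Phi(\tau(x),x)$.

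Next I would apply the implicit function theorem to the smooth function $F(t,x)=h(\Phi(t,x))$. Along any flow line one computes
\begin{equation*}
\frac{\partial F}{\partial t}(t,x)=\langle \gr(h),\gr(h)\rangle _{\Phi(t,x)}=|\gr(h)|^{2}_{\Phi(t,x)}>0,
\end{equation*}
since $\Phi(t,x)\in S^{4}\backslash c(h)$. Consequently, the equation $F(t,x)=0$ can be locally solved for $t$ as a smooth function of $x$, which must agree with $\tau$ by uniqueness. Thus $\tau$ is smooth on $S^{4}\backslash c(h)$, and $\pi(x)=\Phi(\tau(x),x)$ is smooth as a composition of smooth maps. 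Therefore $h^{\perp}=p\circ \pi$ is smooth.

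The only genuine subtlety—which I would call the main obstacle—is verifying that $\tau$ is globally defined, i.e.\ that every $\gr(h)$-orbit in $S^{4}\backslash c(h)$ actually reaches $S^{4}_{0}$. This is where the hypothesis that $h$ has exactly two critical points on $S^{4}$ is used essentially: compactness of $S^{4}$ forces each such orbit to have $\alpha$- and $\omega$-limits in $c(h)$ (with values $-1$ and $+1$), so by continuity of $h$ along the orbit the intermediate value $0$ is attained, and by strict monotonicity of $t\mapsto F(t,x)$ it is attained exactly once. Once this is in hand, the implicit function theorem delivers smoothness with no further work.
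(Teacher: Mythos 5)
Your proof is correct and follows essentially the same route as the paper's: both factor $h^{\perp}$ through the flow of $\gr (h)$, reduce the statement to smoothness of the crossing-time function $x\mapsto \tau (x)$ (which the paper asserts and you justify via the implicit function theorem, using $\frac{\partial F}{\partial t}=|\gr (h)|^{2}>0$ away from $c(h)$), and then compose with the smooth projection $p$. One cosmetic slip: the critical values of $h$ itself need not be $\pm 1$ (those levels carry the extrema of $h_{\K }$, not of $h$), but your argument only needs that $0$ lies strictly between the minimum and maximum values of $h$, which indeed holds.
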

\begin{proof} Since $\gr(h)$ is a smooth vector field on a compact smooth manifold $S^4$, it generates a smooth flow $\Phi \colon \RR \times S^{4}\to S^{4}$ by \cite[page 147, Theorem 6]{SP}. For every $x\in S^{4}\backslash c(h)$, there exists a unique value $t_x\in \RR $ such that $h(\Phi (t_x,x))=0$ and thus $h^{\perp}(x)=p(\Phi (t_x,x))$. This defines a smooth map $S^{4}\backslash c(h)\to \RR , x\mapsto t_x$. The projection $p\colon S^{4}_{0}\to \Sigma $ is also smooth. 
\end{proof}

Denote by $h_{\K }$ (resp. $h^{\perp}_{\K }$) the restriction of $h$ (resp. $h^{\perp}$) to $\K $. The map $h^{\perp}_{\K }\colon \K \to \Sigma $ will be called the \textbf{flattening map} that corresponds to the marked graph diagram $\Gamma $ of $\K $. If we think of the hyperbolic splitting as a height function on $\K $, then the flattening map flattens $\K $ against $\Sigma $, and as $\K $ itself is not flat (but might be knotted) we obtain creases along some curves in $\Sigma $. In our case, these creases are simple to describe. 

\begin{proposition} \label{prop1} Let $\Gamma $ be a marked graph diagram of an embedded surface $\K $. If $h$ denotes the hyperbolic splitting of $\K$ defined by $\Gamma $, then the set of critical values of the flattening map $h^{\perp}_{\K} \colon \K \to \Sigma $ equals $\Gamma $. 
\end{proposition}
\begin{proof} It follows from Definition \ref{def1} that the Morse function $h$ has no critical points in a 4-ball containing $\K $. By \cite[page 148, Theorem 7]{SP}, we may choose local  coordinates $(x_{1},x_{2},x_{3},x_{4})$ so that $\gr (h)=\frac{\partial }{\partial x_{4}}$. Thus, the flow lines of $\gr (h)$ are parallel vertical lines. 

The complement of the 0-section $\K _0$ of the surface $\K $ consists of two components $\K _{(0,1]}=\K \cap S^{4}_{(0,1]}$ and $\K _{[-1,0)}=\K \cap S^{4}_{[-1,0)}$. After applying a horizontal isotopy of $\K $ that fixes $\K _0$, we may assume that at every point $x\in \K _{(0,1]}$ (resp. $x\in \K _{[-1,0)}$), the vector field $\gr (h)$ is transverse to $\K $ and thus $x$ has a neighbourhood $\mathcal{U}$ such that $h^{\perp}|_{\mathcal{U}}\colon \mathcal{U}\to h^{\perp}(\mathcal{U})$ is a diffeomorphism. It follows that all critical points of $h^{\perp}_{\K }$ are contained in $\K _0$, and the set of critical values is contained in $\Gamma =p(\K _0)$. 

Let $y\in \Gamma $, then $y=p(x)$ for some $x\in \K _{0}$. First suppose that $x$ is not a hyperbolic point of $h_\K $. Then the gradient flows of $\gr h(x)$ and $\gr (h_{\K })(x)$ coincide and $\gr h(x)$ spans a 1-dimensional linear subspace of $T_{x}\K $ that lies in the kernel of $Dh^{\perp}(x)$, therefore $y=h^{\perp}(x)$ is a critical value of $h^{\perp}_{\K }$. 

In case $x$ is a hyperbolic point of $h_\K $, then $x=\Phi (0,x)$ is a critical point of the projection $p$, since $\K _0$ fails to be a manifold at $x$. Thus $y=h^{\perp}(x)$ is a critical value of $h^{\perp}_{\K }$. 
\end{proof}

In order to describe flattenings of knotted surfaces, we introduce the following ter\-mi\-no\-lo\-gy. Let $\Gamma $ be an embedded 4-valent graph in a 2-sphere $\Sigma $. Each connected component of $\Sigma \backslash \Gamma $ will be called a \textbf{region} of $\Gamma $. Let $\K $ be a smoothly embedded surface in a 4-manifold $M$ (the embedding being proper at the boundary if necessary), and let $f\colon M\to \Sigma $ be a smooth map, such that the set of critical values of $f|_{\K }$ is contained in $\Gamma $. We define the \textbf{multiplicity} of $f|_{\K}$ in a region $U$ as $m_{f|_{\K }}(U)=|\K \cap f^{-1}(x)|$ for any $x\in U$. It follows from Corollary \ref{cor1} that the multiplicity of $f|_{\K }$ in a region is well defined.  \\

 It is often convenient to identify sections $\K _{t}$ for different values of $t$. Denote by $v\subset \K _0$ the set of all hyperbolic points of $h_{\K }$, and let $\mathcal{V}\subset \K $ denote the union of all ascending and descending manifolds of these critical points. For each $t\in (-1,1)\backslash \{0\}$, there exists a diffeomorphism $\rho _{t,0}\colon \K _{t}\backslash \mathcal{V}\to \K _{0}\backslash v$, induced by the gradient flow of the restriction $h_{\K}$. Moreover, the same flow induces a map $\K _{t}\cap \mathcal{V}\to v$ that is two-to-one, and thus the diffeomorphism $\rho _{t,0}$ may be extended to a continuous map $\overline{\rho }_{t,0}\colon \K _{t}\to \K _{0}$. \\

Let $\Gamma $ be a marked graph diagram that defines a hyperbolic splitting $h$ of an embedded surface $\K $. Denote by $\Gamma _{+}$ (resp. $\Gamma _{-}$) the two resolutions of $\Gamma $, defined by the markers: $\Gamma _{-}$ is a diagram of $\K _{-\epsilon }$ and $\Gamma _{+}$ represents a diagram of $\K _{\epsilon }$. The gradient flow of $h_{\K }$ induces diffeomorphisms $\rho _{\pm \epsilon,0}\colon \K _{\pm \epsilon }\backslash \mathcal{V}\to \K _{0}\backslash v$. The diffeomorphism $\rho _{\pm \epsilon ,0}$ may be extended to a continuous map $\overline{\rho }_{\pm \epsilon ,0}\colon \K _{\pm \epsilon }\to \K _{0}$, whose restriction to $\K _{\pm \epsilon }\cap \mathcal{V}$ is two-to-one. This induces maps between the diagrams $\rho _{\pm }\colon \Gamma _{\pm }\to \Gamma $. Diagrams $\Gamma _{-}$ and $\Gamma _{+}$ will be called the ``lower half diagram''  and the ``upper half diagram'' of $\Gamma $.\\

\begin{figure}[h!]
\labellist
\normalsize \hair 2pt
\pinlabel $\Gamma _{-}$ at 570 0
\pinlabel $\Gamma _{+}$ at 1980 0
\pinlabel $\Gamma $ at 1230 770
\pinlabel $1$ at 120 520
\pinlabel $1$ at 980 520
\pinlabel $1$ at 270 100
\pinlabel $1$ at 830 100
\pinlabel $2$ at 380 270
\pinlabel $1$ at 570 120
\pinlabel $1$ at 570 440
\pinlabel $1$ at 1520 520
\pinlabel $1$ at 2380 520
\pinlabel $1$ at 1620 100
\pinlabel $1$ at 2230 100
\pinlabel $1$ at 1950 490
\pinlabel $1$ at 1950 140
\pinlabel $2$ at 1700 330
\pinlabel $2$ at 800 1290
\pinlabel $2$ at 1660 1270
\pinlabel $2$ at 940 870
\pinlabel $2$ at 1500 870
\pinlabel $2$ at 1230 1280
\pinlabel $2$ at 1220 890
\pinlabel $4$ at 990 1100
\pinlabel $4$ at 1440 1100
\pinlabel $2$ at 1220 1090
\pinlabel $\rho _{-}$ at 630 770
\pinlabel $\rho _{+}$ at 1788 770
\endlabellist
\begin{center}
\includegraphics[scale=0.17]{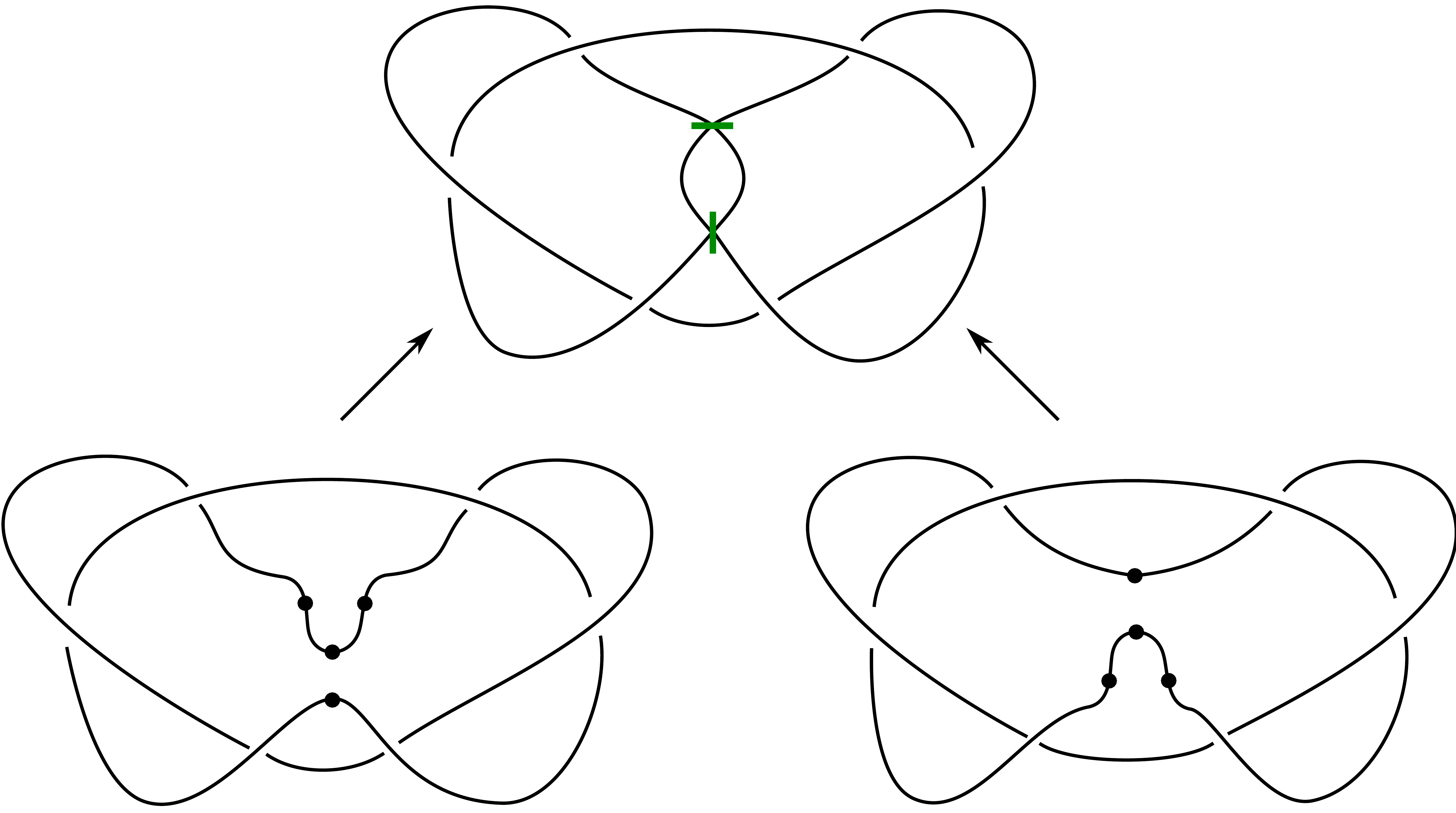}
\caption{A marked graph diagram of the spin of the trefoil knot with its lower and upper half diagrams and nonzero multiplicities of the flattening map in their respective regions}
\label{fig2}
\end{center}
\end{figure}

\begin{example}\label{ex1} Figure \ref{fig2} depicts a marked graph diagram of the spin of the trefoil knot (the construction of spun knots is described on page \pageref{spin}). The hyperbolic splitting $h$, co\-rres\-pon\-ding to this diagram, induces the flattening map $h^{\perp}_{\K }$, whose multiplicities in the respective regions are shown. 
\end{example}

Suppose $U$ is a region of $\Gamma $, then its boundary $\partial U\subset \Sigma $ is a subset of $\Gamma $. Denote by $U^{+}$ the region of $\Gamma _{+}$ for which $\rho _{+}^{-1}(\partial U)\subset \partial U^{+}$. Similarly, denote by $U^{-}$ the region of $\Gamma _{-}$ for which $\rho _{-}^{-1}(\partial U)\subset \partial U^{-}$. We will say that the region $U$ of $\Gamma $ \textbf{is associated with} the region $U^{-}$ of $\Gamma _{-}$ and the region $U^{+}$ of $\Gamma _{+}$. Now $\Gamma _{-}$ (resp. $\Gamma _{+}$) is an unlink, and the disks capping off the components of this unlink represent the 0-handles (resp. 2-handles) of $\K$. Denote $\K ^{+}=\K \cap S^{4}_{[\epsilon ,1]}$ and $\K ^{-}=\K \cap S^{4}_{[-1,-\epsilon ]}$. Applying the same reasoning as in the proof of Proposition \ref{prop1}, the set of critical values of the restriction $h^{\perp}_{\K^{+}}\colon \K^{+}\to \Sigma $ may be identified with $\Gamma _{+}$, while the set of critical values of the restriction $h^{\perp}_{\K^{-}}\colon \K^{-}\to \Sigma $ may be identified with $\Gamma _{-}$. It follows that the multiplicity of $h^{\perp}_{\K }$ in the region $U$ may be computed as 
\begin{xalignat*}{1}
& m_{h^{\perp}_{\K}}(U)=
m_{h^{\perp}_{\K ^{-}}}(U^{-})+m_{h^{\perp}_{\K ^{+}}}(U^{+})\;.
\end{xalignat*}

Multiplicities of the flattening map $h^{\perp}_{\K }$ in the regions of $\Gamma $ are guided by some simple properties that we describe below. Two regions of $\Gamma $ (resp. $\Gamma _{\pm }$) are called \textbf{adjacent} if their boundaries in $\Sigma $ share the same edge of $\Gamma $ (resp. $\Gamma _{\pm }$). 

\begin{lemma}\label{lemma3} Let $\Gamma $ be the marked graph diagram of an embedded surface $\K $ with a hyperbolic splitting $h$. The multiplicities of $h^{\perp}_{\K ^{+}}$ in any two adjacent regions of $\Gamma _{+}$ differ by 1. Also, the multiplicities of $h^{\perp}_{\K ^{-}}$ in any two adjacent regions of $\Gamma _{-}$ differ by 1. 
\end{lemma}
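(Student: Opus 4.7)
The plan is to exploit the disk-structure of $\K^{+}$ (and symmetrically $\K^{-}$) as 2-handles capping the unlink $\K_{\epsilon}$, and reduce the multiplicity count to a sheet-count of immersed disks in $\Sigma$.

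First I would decompose $\K^{+}=D_{1}\sqcup \ldots \sqcup D_{k}$, where each $D_{i}$ is a 2-disk capping off a component $C_{i}$ of the unlink $\K_{\epsilon}$. Correspondingly, $\Gamma_{+}$ is the union of the immersed loops $\Gamma_{+}^{i}=h^{\perp}(C_{i})$, whose pairwise transverse intersections account for the vertices of the 4-valent diagram $\Gamma_{+}$. For $x\in \Sigma \backslash \Gamma_{+}$, the multiplicity splits as
$$m_{h^{\perp}_{\K^{+}}}(x)=\sum_{i=1}^{k}d_{i}(x), \qquad d_{i}(x)=|D_{i}\cap (h^{\perp})^{-1}(x)|.$$

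Second I would apply a horizontal isotopy of $\K$ as in the proof of Proposition \ref{prop1} so that $\gr(h)$ is nowhere tangent to $\K$ along $\K^{+}$; this is automatic at the local maxima of $h_{\K}$ (where $T_{x}\K \subset \ker Dh(x)$ and so $\gr(h)\perp T_{x}\K$), and can be enforced elsewhere by the isotopy. Choosing the projection $p$ generically with respect to the flow-down of each $D_{i}$ into $S^{4}_{0}$, the restriction $h^{\perp}|_{D_{i}}$ becomes a smooth immersion of the 2-disk $D_{i}$ into $\Sigma$, sending $\partial D_{i}\subset \K_{\epsilon}$ onto $\Gamma_{+}^{i}$ and having only transverse double points as self-intersections. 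Consequently $h^{\perp}(D_{i})$ is an immersed 2-disk bounded by $\Gamma_{+}^{i}$, and the sheet-count $d_{i}$ is locally constant on $\Sigma \backslash \Gamma_{+}^{i}$, jumping by exactly $\pm 1$ across a regular (non-self-intersection) point of $\Gamma_{+}^{i}$.

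Third, given adjacent regions $U_{1}, U_{2}$ of $\Gamma_{+}$ sharing an edge $e$, pick an interior point $x_{0}$ of $e$. Since the vertices of $\Gamma_{+}$ are precisely the crossings, $x_{0}$ lies on exactly one component $\Gamma_{+}^{i}$. For $j\neq i$, the function $d_{j}$ is locally constant in a neighborhood of $x_{0}$ and hence $d_{j}(U_{1})=d_{j}(U_{2})$. For $j=i$, exactly one sheet of the immersed disk $h^{\perp}(D_{i})$ ends along $e$, giving $|d_{i}(U_{1})-d_{i}(U_{2})|=1$. Summing in the formula above yields $|m_{h^{\perp}_{\K^{+}}}(U_{1})-m_{h^{\perp}_{\K^{+}}}(U_{2})|=1$. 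The case of $\Gamma_{-}$ is identical after replacing $\K^{+}$ by $\K^{-}$, which is also a disjoint union of 2-disks capping the unlink $\K_{-\epsilon}$.

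The main technical obstacle is establishing that $h^{\perp}|_{D_{i}}$ is genuinely an immersion whose boundary image is $\Gamma_{+}^{i}$ and whose multiplicity jumps by exactly one across each regular boundary arc, particularly near the local maxima of $h_{\K}$ where the transversality of $\gr(h)$ to $\K$ is non-generic. This hinges on the joint genericity of the horizontal isotopy and the projection $p$, together with the fact already used in Proposition \ref{prop1} that critical points of $h^{\perp}_{\K^{+}}$ can only occur where $\gr(h)$ fails to be transverse to $\K^{+}$ or where $p$ is singular along the flow-image of $\K^{+}$.
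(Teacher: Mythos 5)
Your proposal is correct and takes essentially the same route as the paper's proof: the shared edge is an arc of the boundary of exactly one capping 2-handle disk, whose flattened image covers one side of that edge once more than the other, while all other sheets contribute equally to both sides, so adjacent multiplicities differ by exactly 1. Your write-up merely makes explicit the decomposition into per-disk sheet counts and the genericity/immersion details (transversality of $\gr (h)$ to $\K ^{\pm}$ and regularity of $p$) that the paper delegates to the discussion surrounding Proposition \ref{prop1}.
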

\begin{proof} Let $U$ and $U'$ be two regions of $\Gamma _{+}$, whose boundaries in $\Sigma $ share a common edge $a\subset \Gamma _{+}$. In the handle decomposition of $\K $ induced by $h$, the arc $a$ represents a part of the boundary of a disk that is a 2-handle of $\K $. The addition of this 2-handle causes a splitting into regions along $a$, and an increase of multiplicity in one of these regions (above which the 2-handle lies) by 1. It follows that $m_{h^{\perp}_{\K ^{+}}}(U')=m_{h^{\perp}_{\K ^{+}}}(U)\pm 1$. For the lower half diagram, the proof is analogous.  
\end{proof}

\begin{corollary} \label{cor3} Let $\Gamma $ be the marked graph diagram of an embedded surface $\K $ with a hyperbolic splitting $h$. The multiplicities of the flattening map $h^{\perp}_{\K }$ in any two adjacent regions of $\Gamma $ differ by 0 or 2. 
\end{corollary}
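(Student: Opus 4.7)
The plan is to reduce the corollary to Lemma \ref{lemma3} via the additivity formula
\[
m_{h^{\perp}_{\K}}(U)=m_{h^{\perp}_{\K ^{-}}}(U^{-})+m_{h^{\perp}_{\K ^{+}}}(U^{+})
\]
derived just above. Given adjacent regions $U$ and $U'$ of $\Gamma$ sharing an edge $e$, if I can show that $U^{+}$ and $U'^{+}$ are adjacent in $\Gamma_{+}$ and that $U^{-}$ and $U'^{-}$ are adjacent in $\Gamma_{-}$, then Lemma \ref{lemma3} forces each of the two summands to change by exactly $\pm 1$ when passing from $U$ to $U'$, so the total difference lies in $\{-2,0,2\}$, which is the claim.

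The heart of the argument is therefore the adjacency of $U^{\pm}$ and $U'^{\pm}$ in $\Gamma_{\pm}$. The interior of $e$ contains no vertex of $\Gamma$, and in particular no marker, so the arc $e$ persists inside both $\Gamma_{+}$ and $\Gamma_{-}$ (the difference between $\Gamma$ and $\Gamma_{\pm}$ is localized at the marked vertices). The defining properties $\rho_{\pm}^{-1}(\partial U)\subset \partial U^{\pm}$ and $\rho_{\pm}^{-1}(\partial U')\subset \partial U'^{\pm}$ then give $e\subset \partial U^{\pm}\cap \partial U'^{\pm}$, so the regions $U^{\pm}$ and $U'^{\pm}$ at least meet along the same arc $e$. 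To conclude $U^{\pm}\neq U'^{\pm}$, I would use the fact that $\Gamma_{\pm}$ is a diagram of an unlink: every edge of such a diagram lies on some closed component of the link, hence it is contained in a cycle of the underlying $4$-valent graph, which prevents it from being a bridge. Consequently $e$ genuinely separates two distinct regions of $\Sigma\backslash \Gamma_{\pm}$, and $U^{\pm}$, $U'^{\pm}$ are adjacent.

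Applying Lemma \ref{lemma3} then yields $m_{h^{\perp}_{\K^{\pm}}}(U^{\pm})-m_{h^{\perp}_{\K^{\pm}}}(U'^{\pm})=\pm 1$ for each sign, and adding the two identities produces the desired dichotomy. The main subtlety I anticipate is precisely the non-bridge verification, since without it one could in principle have $U^{+}=U'^{+}$ while $U^{-}\neq U'^{-}$, which would produce a difference of $\pm 1$ and break the corollary; the rest of the argument is routine bookkeeping built on the multiplicity identity recalled above.
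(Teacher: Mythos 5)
Your proposal is correct and takes essentially the same route as the paper: both pass from the adjacent regions $U,U'$ of $\Gamma $ to their associated regions in $\Gamma _{+}$ and $\Gamma _{-}$, observe that the shared edge survives in the resolutions so these associated regions are adjacent, and then add the two applications of Lemma \ref{lemma3} via the multiplicity formula $m_{h^{\perp}_{\K}}(U)=m_{h^{\perp}_{\K ^{-}}}(U^{-})+m_{h^{\perp}_{\K ^{+}}}(U^{+})$. The only difference is your explicit check that $U^{\pm}\neq U'^{\pm}$ (the edge of the unlink diagram lies on a cycle, hence is not a bridge), a point the paper leaves implicit when it declares the associated regions adjacent and invokes Lemma \ref{lemma3}.
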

\begin{proof} Let $U_1$ and $U_2$ be two regions of $\Gamma $, whose boundaries in $\Sigma $ share a common edge $a\subset \Gamma $. The region $U_i$ is associated to a region $U_{i}^{+}$ of $\Gamma _{+}$ and to a region $U_{i}^{-}$ of $\Gamma _{-}$ for $i=1,2$. The preimage of $a$ under the map $\rho _{\pm }\colon \Gamma _{\pm }\to \Gamma $ is an edge $\rho ^{-1}_{\pm }(a)$ of $\Gamma _{\pm }$, that is common to the boundaries of $U_{1}^{\pm }$ and $U_{2}^{\pm }$ in $\Gamma _{\pm}$. Thus, $U_{1}^{+}$ and $U_{2}^{+}$ are adjacent regions of $\Gamma _{+}$, and $U_{1}^{-}$ and $U_{2}^{-}$ are adjacent regions of $\Gamma _{-}$. Let us denote $m_{h^{\perp}_{\K ^{+}}}(U_{1}^{+})=k$ and $m_{h^{\perp}_{\K ^{-}}}(U_{1}^{-})=l$, then by Lemma \ref{lemma3} we have $m_{h^{\perp}_{\K ^{+}}}(U_{2}^{+})=k\pm 1$ and $m_{h^{\perp}_{\K ^{-}}}(U_{2}^{-})=l\pm 1$.  It follows that $m_{h^{\perp}_{\K }}(U_{1})=k+l$, while the multiplicity $m_{h^{\perp}_{\K }}(U_{2})$ is either $k+l$ or $k+l\pm 2$. 
\end{proof}

\begin{corollary}\label{cor4} Let $\Gamma $ be the marked graph diagram of an embedded surface $\K $ with a hyperbolic splitting $h$. For any region $U$ of $\Gamma $, the multiplicity $m_{h^{\perp }_{\K }}(U)$ is even. 
\end{corollary}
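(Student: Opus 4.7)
The plan is to realise the multiplicity $m_{h^{\perp}_\K}(U)$ as the cardinality of the transverse intersection of $\K$ with a topological $2$-sphere $F_x\subset S^4$, and then to exploit the vanishing $H_2(S^4;\ZZ/2)=0$.

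Fix a generic point $x\in U$ and set $P_x:=p^{-1}(x)\cap S^4_0$. Since $p\colon S^4_0\to\Sigma$ is a regular projection, $P_x$ is a smoothly embedded circle in $S^4_0\cong S^3$ for generic $x$. By the very definition of $h^{\perp}$, a point $y\in S^4\setminus c(h)$ satisfies $h^{\perp}(y)=x$ exactly when the flow line of $\gr(h)$ through $y$ meets $P_x$. Therefore the set
\[
F_x:=\bigl\{\Phi(t,z)\,:\,z\in P_x,\ t\in\RR\bigr\}\cup c(h)
\]
coincides with $(h^{\perp})^{-1}(x)\cup c(h)$. Because each flow line emanating from a point of $P_x$ limits at the two points of $c(h)$ as $t\to\pm\infty$, the space $F_x$ is the suspension of the circle $P_x$, hence a topological $2$-sphere in $S^4$ with two cone points at $c(h)$.

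Since $\K\cap c(h)=\emptyset$ by Definition \ref{def1}, we have $(h^{\perp}_\K)^{-1}(x)=\K\cap F_x$. A further small perturbation of $x$ within $U$ puts $\K$ in transverse position with the smooth locus $F_x\setminus c(h)$: the vector field $\gr(h)$ is already transverse to $\K\setminus\K_0$ after the horizontal isotopy used in the proof of Proposition \ref{prop1}, and the remaining direction tangent to $F_x$ (along the circle $P_x$) can be rotated freely by varying $x$ in $U$. Corollary \ref{cor1} then gives $m_{h^{\perp}_\K}(U)=|\K\cap F_x|$. Since $F_x$ is a compact boundaryless topological $2$-cycle in $S^4$ and $H_2(S^4;\ZZ/2)=0$, its mod-$2$ homology class vanishes, and the mod-$2$ intersection pairing yields
\[
|\K\cap F_x|\equiv [\K]\cdot[F_x]\equiv 0\pmod 2.
\]
Hence $m_{h^{\perp}_\K}(U)$ is even.

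The step I expect to require the most care is the transversality argument, because $F_x$ fails to be a smooth submanifold at the two cone points of $c(h)$. The saving grace is that $\K$ is disjoint from $c(h)$ by Definition \ref{def1}, so the intersection $\K\cap F_x$ sits entirely in the smooth stratum of $F_x$, and the standard mod-$2$ intersection formalism applies verbatim. I note also that, given Corollary \ref{cor3}, it would suffice to exhibit one region of $\Gamma$ with even multiplicity; the argument above in fact exhibits this for \emph{every} region, making any appeal to Corollary \ref{cor3} unnecessary.
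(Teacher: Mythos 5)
Your strategy---realizing $m_{h^{\perp}_{\K}}(U)$ as a transverse intersection number of $\K$ with a closed surface and then using $H_2(S^4;\ZZ/2)=0$---is genuinely different from the paper's proof (which just combines Corollary \ref{cor3} with the existence of a region of multiplicity $0$), but as written it has a gap at the crucial point: the claim that $P_x=p^{-1}(x)$ is a smoothly embedded circle, and hence that $F_x$ is a $2$-sphere. The map $p$ is a regular projection of $S^4_0\cong S^3$ onto a $2$-sphere $\Sigma\subset S^4_0$; such a projection is not defined on all of $S^3$, and for the natural models its point-preimages are arcs, not circles. In the suspension model ($S^3$ viewed as the join of $\Sigma$ with two poles, $p$ undefined at the poles), $p^{-1}(x)$ is an open arc limiting on the two poles, so your $F_x$ is the suspension of an arc, i.e.\ a $2$-disk whose boundary is the fixed, $x$-independent circle swept out by the flow lines through the poles; a disk is not a mod-$2$ cycle, and the intersection-number argument collapses, since $|\K\cap D_x|$ by itself carries no parity constraint. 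If instead one projects from a single point $q$, then $p^{-1}(x)$ is an arc whose closure is a circle through $q$; the argument can then be salvaged, but only after replacing $F_x$ by the closure of $(h^{\perp})^{-1}(x)$ (which adds the flow line through $q$ as well as $c(h)$) and checking by a genericity argument that $\K$ misses that extra flow line, so the intersection count is unchanged. None of this appears in your write-up, and it is exactly the step that makes $[F_x]$ a closed mod-$2$ class.

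Two smaller remarks. First, in the disk model your idea is still repairable, but only by comparing two fibers: $D_x\cup D_{x'}$ glued along their common boundary circle is a sphere, so any two regions have multiplicities of equal parity, and one then invokes a region of multiplicity $0$---which is precisely the structure of the paper's argument via Corollary \ref{cor3}, so the claimed independence from that corollary would be lost. Second, the transversality issue you single out as delicate is actually the unproblematic part: at any $y\in\K\cap F_x$ away from $c(h)$ (and away from the exceptional flow line), $h^{\perp}$ is a submersion whose fiber through $y$ is $F_x$, so transversality of $\K$ and $F_x$ at $y$ is equivalent to $x$ being a regular value of $h^{\perp}_{\K}$ near $y$, and this holds automatically because $x\in U$ lies off $\Gamma$ by Proposition \ref{prop1}; no horizontal isotopy or perturbation of $x$ is needed.
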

\begin{proof} This follows directly from the Corollary \ref{cor3} and the fact that every marked graph diagram contains a region where the flattening map has multiplicity 0. 
\end{proof}

Mappings of the plane into the plane were first thoroughly analysed by Whitney, who accomplished that a generic map between two 2-dimensional manifolds may have singular points lying along smooth non-intersecting curves called the ``folds'', and isolated ``cusp'' points on the folds \cite{WH}. In our case, the flattening map $h^{\perp}_{\K }\colon \K \to \Sigma $ is not quite generic, as $\K _0$ contains all hyperbolic points of $h_{\K }$. The marked graph diagram $\Gamma $ represents the image of the fold curves. Its vertices may be of three different types: 
\begin{enumerate}
\item a fold crossing represents a crossing of two fold curves,
\item a branch point where two cusps meet (a non-generic situation),
\item a cusp.
\end{enumerate}

\begin{figure}[h!]
\labellist
\normalsize \hair 2pt
\endlabellist
\begin{center}
\includegraphics[scale=0.2]{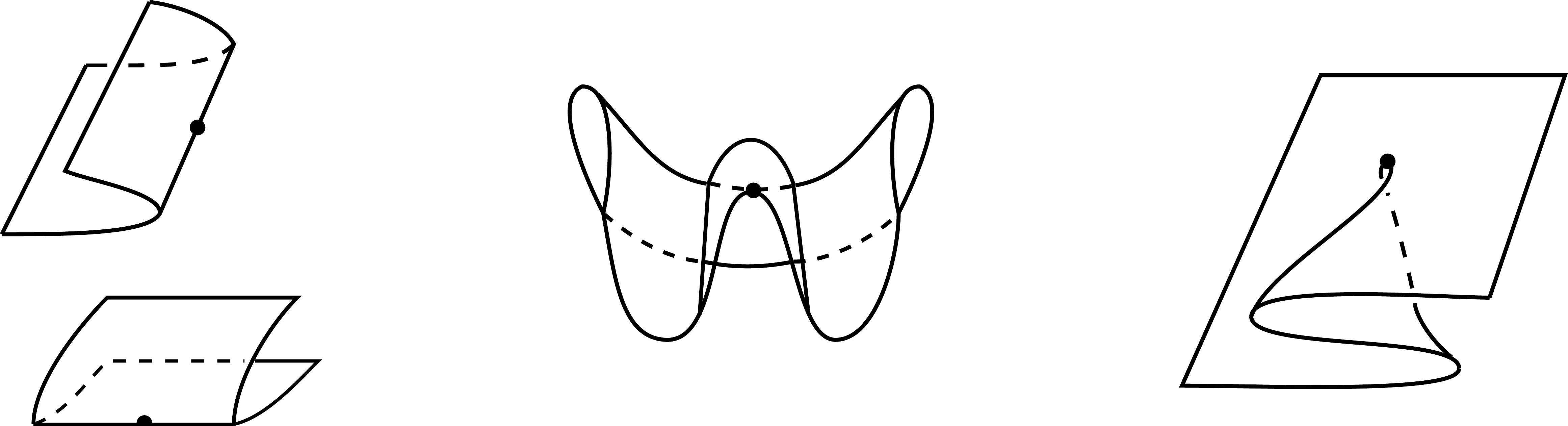}
\caption{The surface above a vertex of $\Gamma $: a fold crossing (left), a branch point (middle) and a cusp (right)}
\label{fig14}
\end{center}
\end{figure}

A part of the surface above each of type of vertex is shown in Figure \ref{fig14}. 
The local multiplicities of the flattening map $h^{\perp}_{\K }$ in the regions surrounding a vertex $v$ of $\Gamma $ are either \begin{itemize}
\item of the form $n$, $n+2$, $n+4$, $n+2$, if $v$ is a fold crossing,
\item of the form $n$, $n+2$, $n$, $n+2$, if $v$ is a branch point, or
\item of the form $n$, $n$, $n$, $n\pm 2$, if $v$ is a cusp
\end{itemize} for some even $n\geq 0$, see Figure \ref{fig13}.

\begin{figure}[h!]
\labellist
\normalsize \hair 2pt
\pinlabel $n+2$ at 140 50
\pinlabel $n$ at 260 150
\pinlabel $n+2$ at 140 260
\pinlabel $n+4$ at 20 150
\pinlabel $n$ at 650 50
\pinlabel $n+2$ at 770 150
\pinlabel $n$ at 650 260
\pinlabel $n+2$ at 530 150
\pinlabel $n\pm 2$ at 1170 50
\pinlabel $n$ at 1300 150
\pinlabel $n$ at 1170 260
\pinlabel $n$ at 1060 150
\endlabellist
\begin{center}
\includegraphics[scale=0.2]{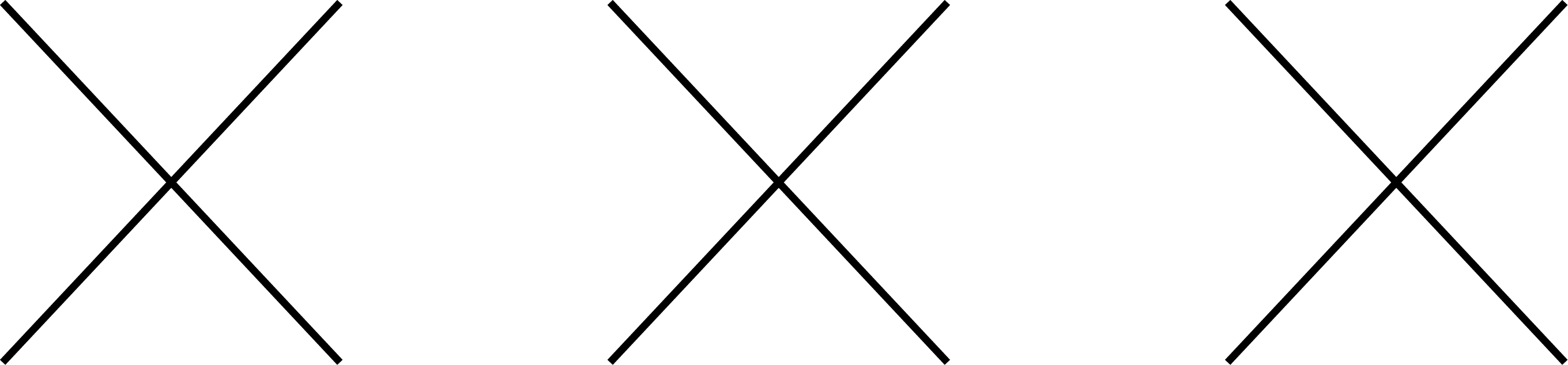}
\caption{Local multiplicities of the flattening map around a vertex of $\Gamma $: a fold crossing (left), a branch point (middle) and a cusp (right)}
\label{fig13}
\end{center}
\end{figure}

\begin{figure}[h!]
\labellist
\normalsize \hair 2pt
\pinlabel $\Gamma _{-}$ at 60 0
\pinlabel $\Gamma _{+}$ at 1680 0
\pinlabel $\Gamma $ at 800 420
\pinlabel $1$ at 140 220
\pinlabel $2$ at 300 250
\pinlabel $1$ at 1190 220
\pinlabel $2$ at 710 680
\pinlabel $2$ at 860 680
\pinlabel $2$ at 1000 680
\endlabellist
\begin{center}
\includegraphics[scale=0.15]{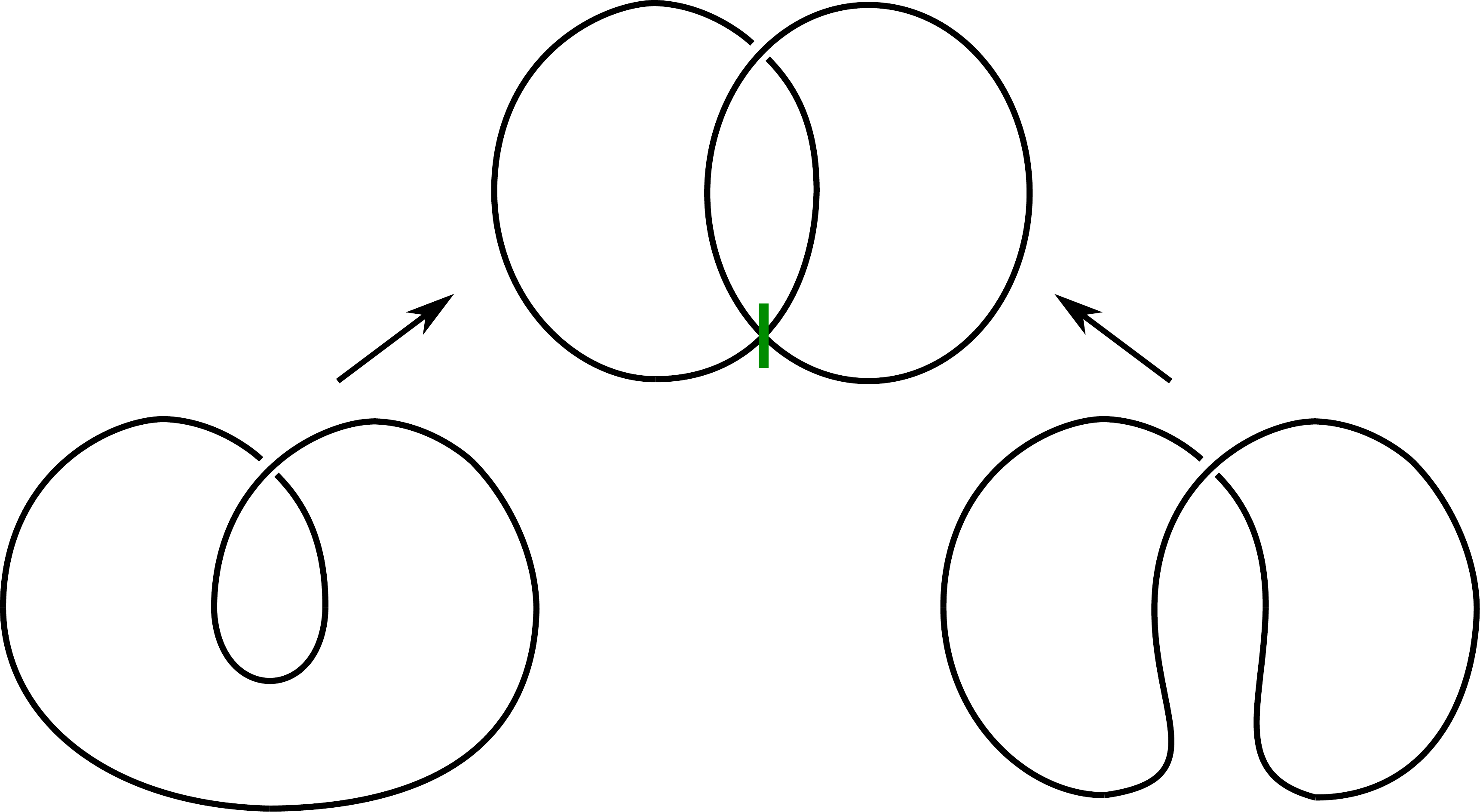}
\caption{A marked graph diagram of the projective plane with its lower and upper half diagrams}
\label{fig4}
\end{center}
\end{figure}

\begin{example}\label{ex2} In Figure \ref{fig4}, a marked graph diagram of the projective plane is given. Both vertices of the diagram $\Gamma $ represent cusps of the flattening map $h^{\perp}_{\K }$. 
\end{example}

Multiplicities of the map $h^{\perp}_{\K ^{+}}$ (resp. $h^{\perp}_{\K ^{-}}$) in the regions of the upper (resp. lower) half diagram of $\K $ may also be determined by considering the movie of the knotted surface $\K $. A movie of $\K $ captures diagrams of sections $\K _{t}\subset S^{4}_{t}$ for $t\in [-1,1]$. Two successive diagrams in this sequence will differ at most by a Reidemeister move of type I, II or III, the birth or death of some simple closed curves (at $t=\pm 1$), or some saddle points (at $t=0$). Figure \ref{fig6} depicts the change of local multiplicities of $h^{\perp}_{\K ^{\pm }}$ during each of these interactions. 

\begin{figure}[h!]
\labellist
\tiny \hair 2pt
\pinlabel $k-1$ at 210 380
\pinlabel $k$ at 210 80
\pinlabel $k$ at 210 600
\pinlabel $k-1$ at 210 820
\pinlabel $k$ at 215 275
\pinlabel $k$ at 630 700
\pinlabel $l$ at 750 700
\pinlabel $m$ at 880 700
\pinlabel $k$ at 600 300
\pinlabel  \rotatebox{90}{$k-l+m$} at 750 220
\pinlabel  \rotatebox{90}{$(\ast )$} at 800 220
\pinlabel $m$ at 910 300
\pinlabel $l$ at 750 30
\pinlabel $l$ at 750 390
\pinlabel $0$ at 1830 340
\pinlabel $1$ at 1830 210
\pinlabel $0$ at 1830 700
\pinlabel $k$ at 2370 820
\pinlabel $k\pm 1$ at 2370 700
\pinlabel $k$ at 2370 580
\pinlabel $k$ at 2370 210
\pinlabel \rotatebox{90}{$k\pm 1$} at 2240 210 
\pinlabel \rotatebox{90}{$k\pm 1$} at 2510 210
\pinlabel $k$ at 1130 700
\pinlabel $k$ at 1130 220
\pinlabel \rotatebox{90}{$k\pm 1$} at 1247 542
\pinlabel \rotatebox{90}{$k\pm 1$} at 1247 100
\pinlabel $l$ at 1230 850
\pinlabel $m$ at 1340 850
\pinlabel\rotatebox{90}{$m\pm 1$} at 1440 700
\pinlabel $l$ at 1230 370
\pinlabel $m$ at 1340 370
\pinlabel \rotatebox{90}{$m\pm 1$} at 1450 220
\pinlabel $l\pm 1$ at 1345 220
\pinlabel $(\ast )$ at 1240 680
\pinlabel \rotatebox{90}{$(\ast )\pm 1$} at 1345 580
\endlabellist
\begin{center}
\includegraphics[scale=0.16]{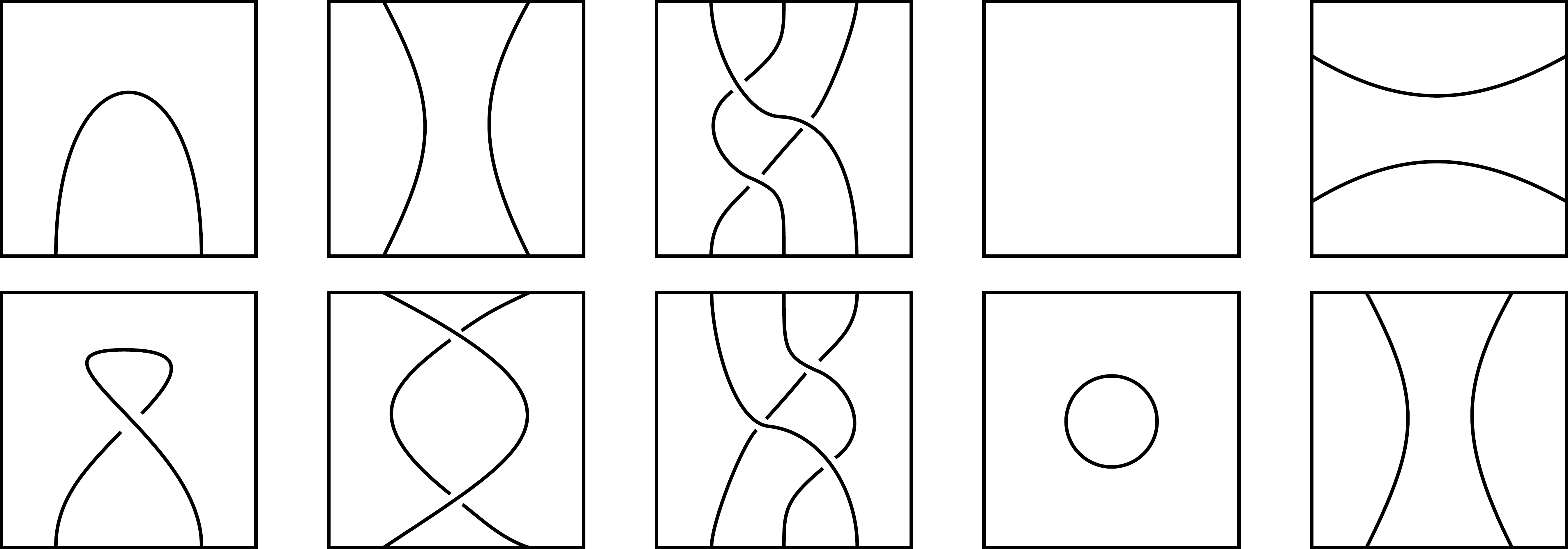}
\caption{Change of the local multiplicities of the map $h^{\perp}_{\K ^{\pm }}$ during ele\-men\-ta\-ry string interactions of movies, defined in \cite{CS}. From left to right: Reidemeister moves I, II, III, the birth of a simple closed curve, and a saddle.}
\label{fig6}
\end{center}
\end{figure}

\begin{lemma} \label{lemma4} Let $\Gamma $ be a marked graph diagram of a closed surface $\K $, smoothly embedded in $S^{4}$. Denote by $h$ the hyperbolic splitting of $\K $, defined by $\Gamma $. The multiplicity of $h^{\perp }_{\K }$ in any region $U$ of $\Gamma $ is completely determined by the marked graph diagram $\Gamma $. 
\end{lemma}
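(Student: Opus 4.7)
The plan is to reduce the claim to the two half diagrams of $\Gamma$ and then compute multiplicities combinatorially by following the movie of $\K$. Recall from the paragraph preceding Lemma \ref{lemma3} that for each region $U$ of $\Gamma$ we have
\[
m_{h^{\perp}_{\K}}(U) \;=\; m_{h^{\perp}_{\K^{-}}}(U^{-}) \;+\; m_{h^{\perp}_{\K^{+}}}(U^{+}),
\]
and that $\Gamma_{+}$, $\Gamma_{-}$, together with the correspondence $U\mapsto U^{\pm}$, are read off from $\Gamma$ by resolving the markers in the two standard ways. Hence it suffices to prove that the multiplicity function of $h^{\perp}_{\K^{+}}$ on the regions of $\Gamma_{+}$ is determined by $\Gamma_{+}$ alone; the case of $\Gamma_{-}$ is entirely symmetric, obtained by running the splitting upside down.

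First I would work with the movie of $\K^{+}$ on the slab $S^{4}_{[\epsilon,1]}$. By Definition \ref{def1}, $h_{\K}$ has no critical points on $(\epsilon,1)$ and all of its maxima lie on $h^{-1}(1)$. Consequently, as $t$ decreases from $1$ to $\epsilon$, the projected diagrams $D(t)$ of $\K_{t}$ evolve only through Reidemeister moves at isolated generic times, together with the simultaneous appearance (reading time backwards) at $t=1$ of the small simple closed curves that shrink onto each maximum. We have $D(\epsilon)=\Gamma_{+}$ and $D(1)=\emptyset$. Define
\[
m_{t}(U) \;=\; \bigl|\K \cap S^{4}_{[t,1]} \cap (h^{\perp})^{-1}(x)\bigr|, \qquad x \in U \subset \Sigma\setminus D(t);
\]
this is constant on regions of $D(t)$ by Corollary \ref{cor1}, and $m_{1}\equiv 0$. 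Running the movie from $t=1$ down to $t=\epsilon$ and recording the local changes spelled out in Figure \ref{fig6} (the birth move contributes the right-most picture; the Reidemeister moves contribute the three left-most pictures) produces the desired multiplicity function $m_{\epsilon}$ on the regions of $\Gamma_{+}$.

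The central step is to show that $m_{\epsilon}$ depends only on $\Gamma_{+}$ and not on the particular movie. For this I would rely on two inputs. On the one hand, Lemma \ref{lemma3} says that across every edge of $\Gamma_{+}$ the function $m_{\epsilon}$ jumps by exactly one, so $m_{\epsilon}$ is determined by its value on one reference region together with the sign of each jump. On the other hand, since the disks capping off $L_{+}$ are unique up to isotopy by \cite[Proposition 2.4]{MZ}, any two movies from $\Gamma_{+}$ to the empty diagram differ by a finite sequence of elementary string interactions whose local effects on multiplicities are exactly the ones in Figure \ref{fig6}. A direct inspection of these pictures shows that each local modification respects the sign pattern of the jumps and preserves the reference region of multiplicity $0$, which may be taken to be any component of $\Sigma\setminus h^{\perp}(\K^{+})$ (such a component exists because $\K^{+}$ is a disjoint union of disks whose projection is a proper compact subset of $\Sigma$). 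Putting these together, $m_{\epsilon}$ is a function of $\Gamma_{+}$ alone, and the same argument for $\Gamma_{-}$ combined with the boxed identity yields the lemma.

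The hard part will be verifying this well-definedness step, i.e., checking that the local rules in Figure \ref{fig6} glue into a globally consistent function on the regions of $\Gamma_{+}$ regardless of the movie chosen. I would handle it by combining the two observations above: Lemma \ref{lemma3} reduces the question to showing that (i) the sign of the jump of $m_{\epsilon}$ across any fixed edge $a\subset\Gamma_{+}$, and (ii) the choice of a region where $m_{\epsilon}$ vanishes, are both intrinsic to $\Gamma_{+}$. For (i), the sign is dictated by which side of $a$ the (isotopy class of the) capping disk lies, which is a property of the unlink diagram $\Gamma_{+}$; for (ii), one picks any region of $\Sigma\setminus h^{\perp}(\K^{+})$, and this choice is invariant under the elementary string interactions of the movie by direct inspection of Figure \ref{fig6}.
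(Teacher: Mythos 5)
Your argument follows essentially the same route as the paper: you reduce to the half diagrams via $m_{h^{\perp}_{\K}}(U)=m_{h^{\perp}_{\K^{-}}}(U^{-})+m_{h^{\perp}_{\K^{+}}}(U^{+})$, compute the half multiplicities by tracing the local rules of Figure \ref{fig6} through a movie, and justify independence of the chosen movie by the uniqueness of the capping disks from \cite[Proposition 2.4]{MZ} together with Lemma \ref{lemma3}. The paper's proof does exactly this, only phrased as tracing the multiplicities backwards along any chosen sequence of moves deforming $\Gamma_{\pm}$ into a split unlink diagram, so your proposal is correct and matches the paper's approach.
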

\begin{proof} The region $U$ is associated with a region $U^{+}$ of $\Gamma _{+}$ and with a region $U^{-}$ of $\Gamma _{-}$. The resolution $\Gamma _{+}$ is the diagram of an unlink $\K _{\epsilon }$ in the 3-sphere $S^{4}_{\epsilon }$. By \cite[Proposition 2.4]{MZ}, the collection of 2-disks, capping off the components of this unlink, is unique up to isotopy. We may thus choose any sequence of moves from Figure \ref{fig6} to deform $\Gamma _{+}$ into a diagram of a split unlink. Tracing the change of multiplicities backwards in this sequence yields the multiplicity $m_{h^{\perp }_{\K ^{+}}}(U^{+})$. The same reasoning may be applied on the resolution $\Gamma _{-}$ to obtain the multiplicity $m_{h^{\perp }_{\K ^{-}}}(U^{-})$, and adding up both, we obtain $m_{h^{\perp }_{\K }}(U)$. 
\end{proof}

\subsection{Relationship between flattenings and generic planar projections}
Takeda studied embedded surfaces in $\RR ^4$ by using generic projections to the plane \cite{TA}. In this Subsection, we discuss the relationship between his perspective and flattenings, coming from hyperbolic splittings.  

\begin{definition} \label{defT} \cite{TA} Let $f\colon F\to \RR ^4$ be an embedding of a closed connected surface and let $\pi \colon \RR ^{4}\to \RR ^{2}$ be an orthogonal projection. We say that $\pi $ is \textbf{generic} with respect to $f$ if $\pi \circ f$ is a $C^{\infty }$ stable mapping.   
\end{definition}

\begin{proposition} \label{prop6} \cite{TA} Let $f\colon F\to \RR ^2$ be a smooth mapping of a closed connected surface to the plane. Denote by $S(f)$ the set of singular points of $f$. Then $f$ is $C^{\infty }$ stable iff $S(f)$ consists merely of fold points and cusps, if its restriction to the set of fold points is an immersion with normal crossings and if for each cusp $q$ we have $$f^{-1}(f(q))\cap S(f)=\{q\}\;.$$ 
\end{proposition}

If $f\colon F\to S^4$ is an embedding of a surface with a hyperbolic decomposition $h\colon S^{4}\to \RR $, then the flattening $h^{\perp }\colon S^{4}\to \Sigma $ does not represent a generic projection with respect to $f$. However, the flattening map can be slightly disturbed to obtain a generic projection. 

\begin{proposition} \label{prop7} Let $f\colon F\to \RR ^{4}$ be an embedding of a closed connected surface and denote by $j\colon \RR ^{4}\to S^{4}$ the compactification map. For each marked graph diagram $\Gamma $ of $(j\circ f)(F)$, there exists a projection $\pi \colon \RR ^{4}\to \RR ^{2}$ that is generic with respect to $f$, so that the set of critical values of $\pi \circ f$ is isotopic to the upper half diagram $\Gamma _{+}$. 
\end{proposition}
\begin{proof}  Denote $\mathcal{K}=(j\circ f)(F)$ and let $\Gamma $ be a marked graph diagram of $\K $ that defines a hyperbolic splitting $h\colon S^{4}\to \RR $ of $\mathcal{K}$. Choose a small positive number $0<r<1$. Let $p\colon S^{4}_{r}\to \RR ^{2}$ be the projection which is regular on $\mathcal{K}_{r}$ and for which $p(\mathcal{K}_{r})=\Gamma _{+}$. Denote by $\Phi \colon \RR \times S^{4}\to S^{4}$ the flow of the vector field $\gr (h)$ and define a projection $\widehat{h}\colon S^{4}\backslash c(h)\to \RR ^{2}$ by $$\widehat{h}(x)=p\left (\Phi (t,x)\cap S^{4}_{r}\right )\;,$$ where $c(h)$ denotes the two critical points of $h$. Using the same reasoning as in the proofs of Lemma \ref{lemma2} and Proposition \ref{prop1}, we show that $\widehat{h}$ is a smooth projection whose set of critical values equals $\Gamma _{+}$. 

Choose a 4-ball neighborhood $U$ of $\mathcal{K}$ so that $\mathcal{K}\subset U\subset S^{4}\backslash c(h)$ and a diffeomorphism $\psi \colon U\to \RR ^{4}$. After applying a horizontal isotopy of $\K $ if neccessary, we may assume that the set of singular points of the composition $\widehat{h}\circ \psi ^{-1}\circ f\colon F\to \RR ^{2}$ consists merely of fold points immersed with normal crossings. It follows by Proposition \ref{prop6} that $\widehat{h}\circ \psi ^{-1}\circ f$ is $C^{\infty }$ stable and its set of critical values is isotopic to $\Gamma _{+}$. Therefore, $\pi =\widehat{h}\circ \psi ^{-1}\colon \RR ^{4}\to \RR ^{2}$ is generic with respect to $f$. 
\end{proof}

\section{Invariants of embedded surfaces that arise from flattenings}\label{sec4}

In this Section, we apply flattenings to define three invariants of knotted surfaces. Denote by $\mathcal{G}(\K )$ the collection of all marked graph diagrams of an embedded surface $\K$. Each marked graph diagram $\Gamma \in \mathcal{G}(\K)$ defines a hyperbolic splitting $h\colon S^{4}\to \RR $ and a smooth flattening map $h^{\perp }\colon S^{4}\backslash c(h) \to \Sigma $, where $\Sigma $ denotes a 2-sphere inside the 0-section $S^{4}_{0}$. By Proposition \ref{prop1}, the set of critical values of $h^{\perp}_{\K}$ equals $\Gamma $. A vertex of $\Gamma $ is called \textbf{inessential} if it is a marked vertex that represents a branch point of the flattening map $h^{\perp}_{\K }$. Any vertex of $\Gamma $ that is not inessential is called \textbf{essential}. Two regions $U$ and $U'$ of $\Gamma $ will be called \textbf{equivalent} if there exists a chain of regions $U_0=U,U_1,U_2\ldots ,U_k=U'$ such that the boundaries $\partial U_i$ and $\partial U_{i+1}$ in $\Sigma $ share the same inessential vertex and their associated regions in $\Gamma _{+}$ coincide: $U_{i}^{+}=U_{i+1}^{+}$ in $\Gamma _{+}$ for $i=0,1,\ldots ,k$. It is easy to see this defines an equivalence relation on the set of regions of $\Gamma $. Moreover, in two equivalent regions, the flattening map $h^{\perp}_{\K }$ has the same multiplicity, see Figure \ref{fig17}. 

\begin{figure}[h!]
\labellist
\normalsize \hair 2pt
\pinlabel $n\pm 2$ at 140 40
\pinlabel $n$ at 240 135
\pinlabel $n\pm 2$ at 140 240
\pinlabel $n$ at 40 135
\endlabellist
\begin{center}
\includegraphics[scale=0.2]{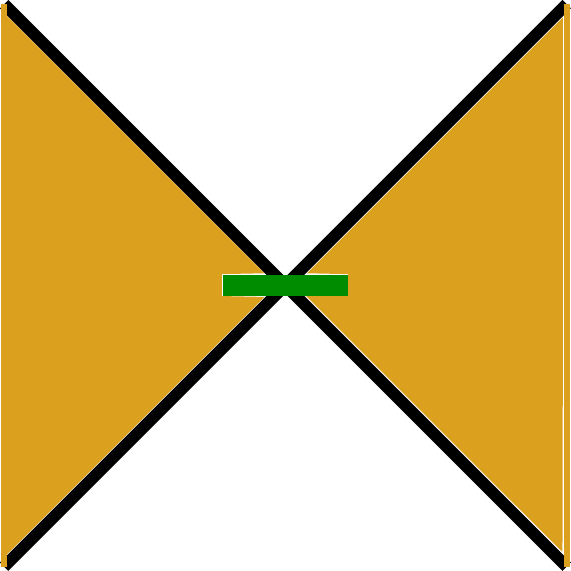}
\caption{In two equivalent regions, the flattening map $h^{\perp}_{\K }$ has the same multiplicity}
\label{fig17}
\end{center}
\end{figure}

Denote by $U_{0},U_{1},\ldots ,U_{n}$ the regions of $\Gamma $. Define an equivalence relation $\sim $ on the index set $\{0,1,\ldots ,n\}$ by $$i\sim j\Leftrightarrow U_i\textrm{ is equivalent to }U_j\;.$$ Denote by $[i]$ the equivalence class of an index $i$ and let $\mathcal{I}=\left \{[i]|\, i\in \{0,1,\ldots ,n\}\right \}$. Now define  
\begin{xalignat*}{1}
& \lay (\Gamma )=\sum _{[i]\in \mathcal{I}}m_{h^{\perp}_{\K}}(U_{i})\;,\;\; \tr(\Gamma )=\max _{[i]\in \mathcal{I}}m_{h^{\perp}_{\K}}(U_i)\;, \; \; p(\Gamma )=\# \left \{[i]\in \mathcal{I}|\,m_{h^{\perp}_{\K}}(U_i)>0\right \}\, \;,\\
& \lay (\K )=\min _{\Gamma \in \mathcal{G}(\K)}\lay (\Gamma )\;,\quad \tr(\K )=\min _{\Gamma \in \mathcal{G}(\K)}\tr (\Gamma )\;,\quad p(\K )=\min _{\Gamma \in \mathcal{G}(\K)}p(\Gamma )
\end{xalignat*}
The values in the last line will be called the \textbf{layering} of $\K $, the \textbf{trunk} of $\K $ and the \textbf{partition number} of $\K $ respectively. Clearly, these invariants are related to some extent:

\begin{proposition} \label{prop3} For any smoothly embedded closed surface $\K $ we have 
\begin{xalignat}{1}\label{eq1}
& \lay (\K )\geq 2p(\K )+\tr (\K)-2\;.
\end{xalignat}
\end{proposition}
\begin{proof} Let $\Gamma \in \mathcal{G}(\K )$ be any marked graph diagram of $\K $, and let $h^{\perp }_{\K }$ be the corresponding flattening map. By Corollary \ref{cor4}, any nonzero multiplicity of $h^{\perp }_{\K }$ in a region of $\Gamma $ is $\geq 2$, thus the sum over all equivalence classes of regions gives $$2(p(\Gamma )-1)+\tr (\Gamma )\leq \lay(\Gamma )\;.$$ Now choose a marked graph diagram $\Gamma _1\in \mathcal{G}(\K )$ for which $\lay(\Gamma _1)=\lay(\K )$, then $\lay(\K )\geq 2(p(\Gamma _1)-1)+\tr (\Gamma _1)\geq 2p(\K )+\tr (\K )-2$. 
\end{proof}

\begin{remark}\label{rem1} Takeda defined similar invariants of embedded surfaces using generic projections to the plane \cite{TA}. Let $f\colon F\to \RR ^4$ be an embedding of a closed connected surface and let $\pi \colon \RR ^{4}\to \RR ^2$ be an orthogonal projection that is generic with respect to $f$. Then the set of singular points $S(\pi \circ f)$ consists of folds and cusps, and $(\pi \circ f)(S(\pi \circ f))$ divides the plane into several regions. The value $|(\pi \circ f)^{-1}(x)|$ for an element $x$ in a given region is called the \textit{local width}, while $w(f,\pi )$ denotes the maximum of the local widths over all the regions and $tw(f,\pi )$ denotes the sum of the local widths over all the regions. The \textbf{width} $w(f(F))$ of an embedded surface $f(F)$ is the minimum of $w(\widehat{f},\widehat{\pi })$, where $\widehat{f}$ runs over all the embeddings isotopic to $f$ and $\widehat{\pi }$ runs over all orthogonal projections which are generic with respect to $\widehat{f}$. The \textbf{total width} $tw(f(F))$ of an embedded surface $f(F)$ is the minimum of $tw(\widehat{f},\widehat{\pi })$, where $\widehat{f}$ runs over all the embeddings isotopic to $f$ and $\widehat{\pi }$ runs over all orthogonal projections which are generic with respect to $\widehat{f}$. 
\end{remark}

\begin{corollary} \label{cor2} For any embedded surface $\mathcal{K}$ in $S^{4}$, its width (as defined by Takeda) and its trunk are related by $w(\mathcal{K})\leq \tr (\K )$. Moreover, its total width (as defined by Takeda) and its layering are related by $tw(\mathcal{K})\leq \lay (\K )$. 
\end{corollary}
\begin{proof} Let $f\colon F\to S^{4}$ be an embedding with $f(F)=\mathcal{K}$. Suppose that $\tr (\K )=k$ and let $\Gamma $ be a marked graph diagram with $\tr (\Gamma )=k$. Denote by $h\colon S^{4}\to \RR $ the hyperbolic splitting of $\K $, associated with the marked graph $\Gamma $. By Proposition \ref{prop7}, there exists a projection $\pi \colon \RR ^{4}\to \RR ^{2}$ that is generic with respect to $f$, such that the set of critical values of $\pi \circ f$ is isotopic to $\Gamma _{+}$. Each region $U$ of $\Gamma $ is associated with a region $U^{+}$ of $\Gamma _{+}$ and the multiplicity of the flattening map $m_{h|_{\K }^{\perp}}(U)$ equals the local width of the projection $\pi \circ f$ in the region $U^{+}$. Moreover, for any two regions $U_{1}$ and $U_2$ we have $U_{1}^{+}=U_{2}^{+}$ if and only if $U_{1}\sim U_{2}$. It follows that $w(f,\pi )=k$ and consequently $w(\K )\leq \tr (\K )$. 

Similarly, let $\lay (\K )=m$ and let $\Gamma $ be a marked graph diagram with $\lay (\Gamma )=m$. Denote by $h\colon S^{4}\to \RR $ the hyperbolic splitting of $\K $, associated with the marked graph $\Gamma $. By Proposition \ref{prop7}, there exists a projection $\pi \colon \RR ^{4}\to \RR ^{2}$ that is generic with respect to $f$, such that the set of critical values of $\pi \circ f$ is isotopic to $\Gamma _{+}$. By similar reasoning as in the previous paragraph, it follows that $tw(f,\pi )=m$ and thus $tw(\K )\leq \lay (\K )$. 
\end{proof}

Let us consider the simplest class of embedded surfaces: those which are unknotted. Recall that an orientable surface $\K $ in $S^{4}$ is \textbf{unknotted} if it bounds a handlebody. By \cite[Theorem 1.2]{HK}, a surface $\K $ in $S^{4}$ is unknotted if and only if it is isotopic to a surface in $S^{3}\subset S^{4}$.

\begin{figure}[h!]
\labellist
\normalsize \hair 2pt
\pinlabel $2$ at 180 220
\pinlabel $0$ at 305 220 
\pinlabel $2$ at 420 220 
\pinlabel $0$ at 545 220 
\pinlabel $0$ at 900 220 
\pinlabel $2$ at 1030 220 
\endlabellist
\begin{center}
\includegraphics[scale=0.15]{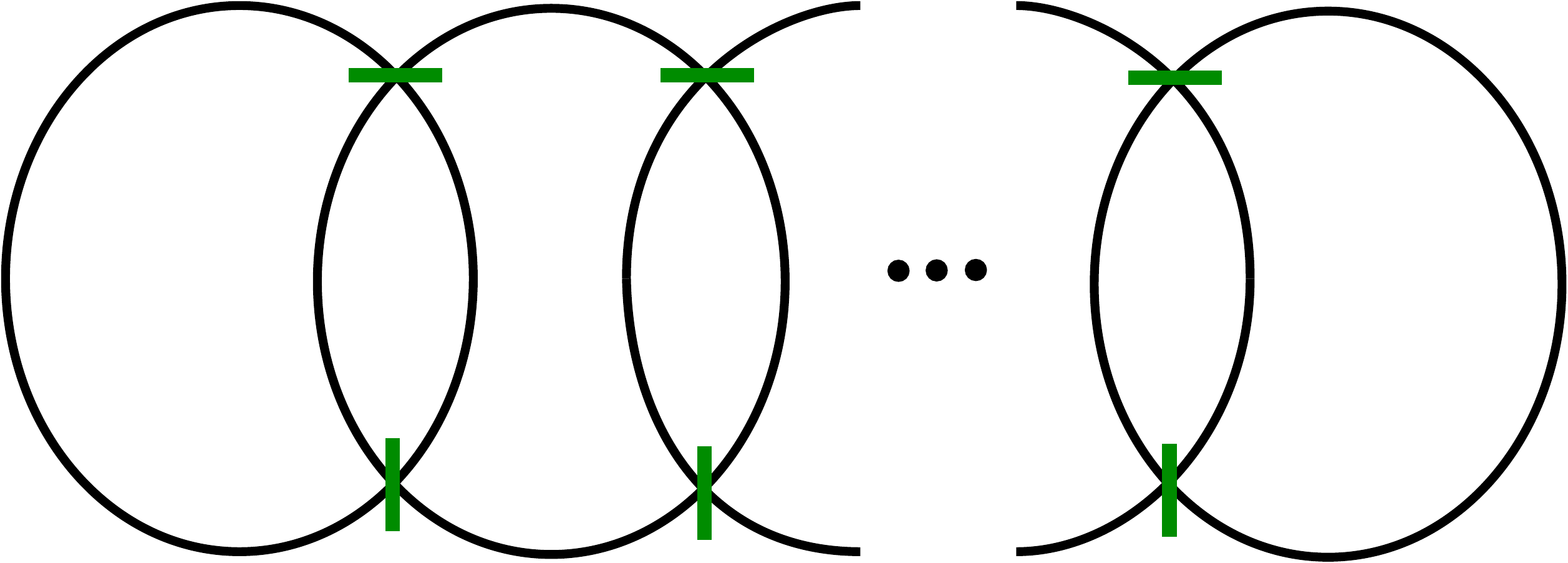}
\caption{A marked graph diagram of the unknotted orientable surface of genus $g$ (the number of circles equals $g+1$)}
\label{fig7}
\end{center}
\end{figure}

\begin{lemma} \label{lemma7} Let $\K $ be an orientable closed surface in $S^4$. The following statements are equivalent:
\begin{enumerate}
\item[(i)] $\K $ is unknotted. 
\item[(ii)] $p(\K )=1$
\item[(iii)] $\lay (\K )=2$
\item[(iv)] $\tr(\K )=2$
\end{enumerate}
\end{lemma}
\begin{proof}
$(i)\Rightarrow (ii)$ If $\K $ is an unknotted sphere, it admits a marked graph diagram without any vertices (a circle), thus $p(\K )=1$. Suppose $\K $ is an unknotted orientable closed surface of genus $g>0$, then it admits a marked graph diagram $\Gamma $ with $2g$ inessential vertices, see Figure \ref{fig7}. All regions of $\Gamma $ in which $h^{\perp}_{\K }$ has nonzero multiplicity, belong to the same equivalence class, thus $p(\Gamma )=1$ and consequently $p(\K )=1$. \\
$(ii)\Rightarrow (iii)$ Suppose $p(\K )=1$. Then $\K $ admits a marked graph diagram in which all regions where $h^{\perp}_{\K }$ has nonzero multiplicity belong to the same equivalence class, and by Corollary \ref{cor3} this multiplicity equals $2$. It follows that $\lay (\K )=2$. \\
$(iii)\Rightarrow (iv)$ The implication is obvious. \\
$(iv)\Rightarrow (i)$  By Corollary \ref{cor2}, an embedded orientable surface $\K $ with $\tr (\K )=2$ has $w(K)=2$, and it follows by \cite[Theorem 3.3]{TA} that $\K $ is unknotted. 
\end{proof}

Next, we examine how our invariants behave under connected sum of surfaces. 

\begin{proposition} \label{prop4} Let $\K _1$ and $\K _2$ be closed connected smoothly embedded surfaces in $S^{4}$, then 
\begin{xalignat*}{1}
& p(\K _1 \# \K_2)\leq p(\K _1)+p(\K _2)-1\;,\quad  \lay (\K _1 \# \K_2)\leq \lay (\K _1)+\lay (\K _2)-2\;,\\
& \textrm{ and }\tr (\K _1 \# \K_2)\leq \max \{\tr (\K _1),\tr (\K _2)\}\;.
\end{xalignat*}
\end{proposition}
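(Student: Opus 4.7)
The plan is to construct a hyperbolic splitting of $\K_1\#\K_2$ by joining optimal splittings of the summands via a single additional band whose marker is a branch point of the resulting flattening. For each of the three inequalities, fix $h_i\in\mathcal{H}(\K_i)$ realising the corresponding invariant on the right-hand side, with banded link $(L_i,b_i)$ and marked graph diagram $\Gamma_i$. Place $\Gamma_1$ and $\Gamma_2$ in disjoint disks of $\Sigma$ so that their outer multiplicity-$0$ regions merge. Since $\K_i$ is non-empty, Corollaries~\ref{cor3} and~\ref{cor4} guarantee that some edge of $\Gamma_i$ separates a multiplicity-$0$ region from a multiplicity-$2$ region. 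Pick such an edge on a component $U_i\subset L_i$, choose an arc $\alpha_i\subset U_i$ on it, and attach a band $b_0$ in the merged outer region with sides on $\alpha_1$ and $\alpha_2$.

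An Euler-characteristic count shows that $(L_1\sqcup L_2,\,b_1\sqcup b_2\sqcup b_0)$ presents $\K_1\#\K_2$, with $b_0$ realising the connecting tube. Let $h$ denote the induced hyperbolic splitting and $\Gamma=\Gamma_1\cup\{m_0\}\cup\Gamma_2$ its marked graph diagram. The four corners surrounding $m_0$ carry multiplicities $0,2,0,2$: the two ``outer'' corners lie in the shared multiplicity-$0$ region, while the two ``inner'' corners sit just across $\alpha_1$ and $\alpha_2$. By the vertex classification listed after Corollary~\ref{cor4} this is the branch-point pattern, so $m_0$ is inessential. Moreover, performing the ``above'' resolution at $m_0$ fuses the components of $L_{1,+}$ and $L_{2,+}$ adjacent to $\alpha_1,\alpha_2$ into a single loop of $\Gamma_+$, so both inner corners of $m_0$ map to the same region of $\Gamma_+$; the equivalence relation preceding Proposition~\ref{prop3} thus declares the two inner regions of $\Gamma$ equivalent.

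Because the marked graph data is locally unchanged away from $m_0$, Lemma~\ref{lemma4} shows that the multiplicities in bounded regions of $\Gamma_1$ and $\Gamma_2$ persist in $\Gamma$, and that every equivalence class of $\Gamma_i$ extends to a class of $\Gamma$. The only new identification is that the ``inside-$U_1$'' class and the ``inside-$U_2$'' class, both of multiplicity $2$, merge into a single class of multiplicity $2$. Consequently
\begin{align*}
p(h)=p(h_1)+p(h_2)-1,\qquad w(h)=w(h_1)+w(h_2)-2,\qquad \tr(h)=\max\{\tr(h_1),\tr(h_2)\},
\end{align*}
the last since $\tr(h_i)\ge 2$ always. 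Minimising over appropriate $h_1,h_2$ then yields the three stated inequalities.

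The main obstacle is the placement of $b_0$: the $0\to 2$ edge on $U_i$ may be interior to $\Gamma_i$ rather than lying on the boundary of the outer region, in which case $b_0$ must be routed through a bounded multiplicity-$0$ region without introducing extraneous crossings with $\Gamma_1\sqcup\Gamma_2$. Verifying that this can always be arranged---possibly by a preliminary horizontal isotopy of $\K_i$ bringing the relevant edge to the outer boundary---is the step that requires the most care.
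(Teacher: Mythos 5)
Your construction is essentially the paper's own proof: embed the two marked graph diagrams in disjoint disks of a common $2$-sphere, join them by a single band (replaced by a marked vertex) running across the shared multiplicity-$0$ region into an adjacent multiplicity-$2$ region of each diagram, observe that the new vertex is an inessential branch point identifying those two multiplicity-$2$ regions while all other multiplicities persist, and minimise over splittings separately for each inequality. The placement obstacle you flag at the end is not actually an issue: by Lemma~\ref{lemma3} both half-diagram multiplicities change by exactly $1$ across any edge, so every region adjacent to the outer multiplicity-$0$ region automatically has multiplicity $2$, and one may attach the connecting band along arbitrary arcs on the boundary of the merged outer region, which is exactly what the paper does.
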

\begin{proof} Let $\Gamma _i \in \mathcal{G}(\K _i)$ be a marked graph diagram of the embedded surface $\K _i$, and let $h_i $ be their corresponding hyperbolic splittings for $i=1,2$. Choose a 2-disk $B_i$ that contains $\Gamma _i$ for $i=1,2$, then disjointly embed these disks into a common 2-sphere $\Sigma $ by a map $j\colon B_1 \sqcup B_2\to \Sigma $. Denote by $U$ the common region of $j(\Gamma _1)$ and $j(\Gamma _2)$, then $\Sigma \backslash (j(B_1)\cup j(B_2))\subset U$. Choose a region $U_i$ of $j(\Gamma _i)$ that is adjacent to $U$ and has $m_{(h_{i}^{\perp})_{\K _i}}(j^{-1}(U_i))=2$ for $i=1,2$. Choose two arcs $a_i \subset \partial U_i\cap \partial U$ for $i=1,2$ and join the regions $U_1$ and $U_2$ by adding a band along $a_1\cup a_2$, then replace this band with a marked vertex as in Figure \ref{fig01} to obtain the connected sum of graphs $\Gamma =j(\Gamma _1 )\# j(\Gamma _2)$. Then $\Gamma $ represents a marked graph diagram for $\K _{1} \# \K _{2}$. Every region of $\Gamma $ where $h^{\perp }_{\K _{1}\# \K _{2}}$ has nonzero multiplicity is either a region of $j(\Gamma _1)$, a region of $j(\Gamma _2)$ or the region coming from $U_1$ and $U_2$, therefore $p(\Gamma )=p(\Gamma _1)+p(\Gamma _2)-1$. If $W$ is any region of $\Gamma _{i}$, different from $U_1$ and $U_2$, then $m_{h^{\perp}_{\K _1\# \K _2}}(j(W))=m_{h_{i}^{\perp}|_{\K _i}}(W)$. The multiplicity of $h^{\perp }_{\K _{1} \# \K _{2}}$ in the region arising from $U_1$ and $U_2$ equals 2. It follows that $\lay (\Gamma )=\lay (\Gamma _1)+\lay (\Gamma _2)-2$ and $\tr (\Gamma )=\max \{\tr (\Gamma _1),\tr (\Gamma _2)\}$. 

Thus, for every pair of marked graph diagrams $\Gamma _{i}\in \mathcal{G}(\K _i)$, there exists a marked graph diagram $\Gamma \in \mathcal{G}(\K _{1} \# \K _{2})$ such that $p(\Gamma )=p(\Gamma _1)+p(\Gamma _2)-1$, $\lay (\Gamma )=\lay (\Gamma _1)+\lay (\Gamma _2)-2$ and $\tr (\Gamma )=\max \{\tr (\Gamma _1),\tr (\Gamma _2)\}$. Choosing the diagrams $\Gamma _i$ so that $p(\Gamma _1)=p(\K _1)$ and $p(\Gamma _2)=p(\K _2)$, it follows that $p(\K _1 \# \K _2)\leq p(\K _1)+p(\K _2)-1$. Si\-mi\-lar\-ly, we may conclude that $\lay (\K _1 \# \K _2)\leq \lay (\K _1)+\lay (\K _2)-2$ and $\tr (\K _1 \# \K _2)\leq \max \{\tr (\K _1),\tr (\K _2)\}$. 

\end{proof}

\begin{remark} \label{rem2} Observe that the inequality \eqref{eq1} from Proposition \ref{prop3} is an equality in the case of an unknotted orientable surface $\K$ by Lemma \ref{lemma7}. Likewise, all three inequalities from Proposition \ref{prop4} are equalities when both $\mathcal{K}_1$ and $\mathcal{K}_2$ are unknotted orientable surfaces.
\end{remark}

\begin{figure}[h!]
\labellist
\normalsize \hair 2pt
\pinlabel $K^{\circ }$ at 434 315
\pinlabel $\overline{K}^{\circ }$ at 1164 315 
\pinlabel $B^{3}$ at 280 100
\pinlabel $B^{3}$ at 1040 100
\pinlabel $S^{2}$ at 810 430
\pinlabel $S^{3}$ at 1360 500
\endlabellist
\begin{center}
\includegraphics[scale=0.15]{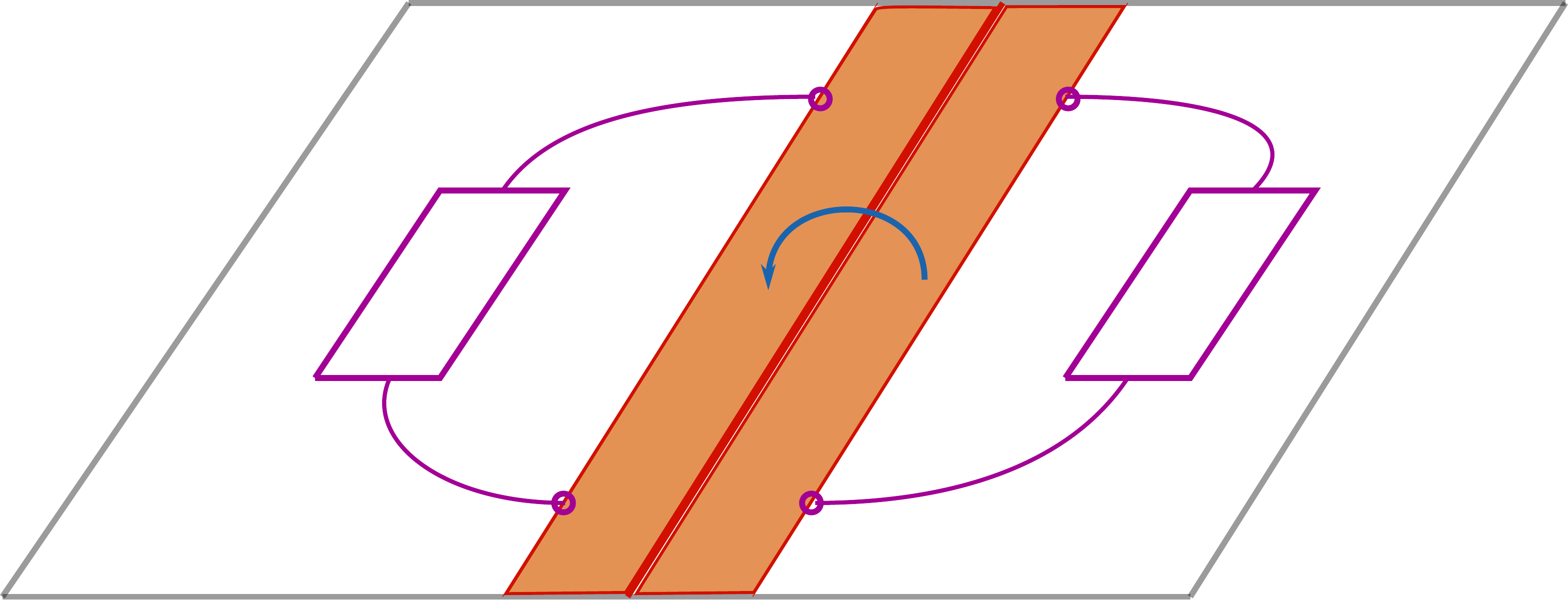}
\caption{The construction of spun knots}
\label{fig16}
\end{center}
\end{figure}

\label{spin}
Spun knots, introduced by Artin, represent the oldest known examples of knotted spheres \cite{AR}. Let $K$ be a 1-knot in $S^3$. A 1-tangle $(B^3,K^{\circ})$ with endpoints on two antipodal points $p_1,p_2\in \partial B^{3}$ is obtained by removing a small open ball neighborhood of a point on $K$. View the 4-sphere as $S^{4}=B^{4}\cup _{S^{3}}B^{4}$ and decompose the equatorial 3-sphere as $S^{3}=B^{3}\cup _{S^{2}}B^{3}$. Let $(B^{3},K^{\circ })$ be a 1-tangle in the first 3-ball. Choose a regular neighborhood $\nu S^{2}\cong S^{2}\times B^{2}$, then spin the pair $(B^{3},K^{\circ })$ around $S^{2}\times \{0\}$ in the complement of $\nu S^{2}$. This gives a decomposition $$(B^{3},K^{\circ })\times S^{1}\cup (S^{2},\{p_0,p_1\})\times B^{2}=(S^{4},\mathcal{S}(\K ))\;.$$ Capping off the annulus $K^{\circ }\times S^{1}$ by the two disks $\{p_0,p_1\}\times B^{2}$, we obtain the knotted sphere $\spin (K)$, called the \textbf{spin} of $K$. Since our definitions of width, trunk and partition number of embedded surfaces originate from similar invariants of 1-dimensional knots, an obvious question arises whether the values of 1-dimensional invariants of a knot $K$ are in any way connected with the corresponding 2-dimensional invariants of the surface knot $\spin (K)$.

\begin{Theorem} \label{th3} Let $K$ be a 1-knot with bridge number $b(K)$. Then $$\tr (\spin (K))\leq 2b(K)\;.$$
\end{Theorem}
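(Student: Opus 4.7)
The plan is to build a hyperbolic splitting $h\in\mathcal{H}(\spin(K))$ in which both $\K_{-\epsilon}$ and $\K_{\epsilon}$ are $b(K)$-component unlinks, and then to bound the flattening multiplicity by the additivity formula $m_{h^{\perp}_{\K}}(U)=m_{h^{\perp}_{\K^{-}}}(U^{-})+m_{h^{\perp}_{\K^{+}}}(U^{+})$ from Section~\ref{sec3}. Throughout, set $n=b(K)$.

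Starting from an $n$-bridge presentation of $K$, one writes $K=T_{+}\cup T_{-}$ as a union of $n$ upper and $n$ lower trivial arcs meeting at $2n$ points on a bridge sphere, and removes a small open ball at an interior point of one of the bridges to obtain the 1-tangle $(D^{3},K^{\circ})$ used in the spin construction on page~\pageref{spin}. The first step is to argue that $\spin(K)$ then admits a banded link presentation $(L,b)$ in which $L$ is an $n$-component unlink---intuitively one circle is contributed by each bridge---and the resolution $L_{b}$ is again an $n$-component unlink. By the material of Section~\ref{sec2}, $(L,b)$ defines a hyperbolic splitting $h\in\mathcal{H}(\spin(K))$ whose lower half $\K^{-}$ consists of $n$ disjoint 0-handle disks $D_{1},\dots,D_{n}$ capping off the components of $\K_{-\epsilon}$, and whose upper half $\K^{+}$ consists of $n$ disjoint 2-handle disks.

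With $h$ fixed, I bound the flattening multiplicity region by region. Applying a horizontal isotopy of $\K$ (which does not alter $\Gamma$) to put each $D_{i}$ into a standard ``cup'' form about its index-$0$ critical point $p_{i}\in\K$---available since cappings of an unlink are unique up to isotopy by \cite[Proposition~2.4]{MZ}---a flow-box analysis near $p_{i}$ shows that a generic fiber of $h^{\perp}$ (a 2-plane swept out by the flow of $\gr(h)$ above a single curve in $S^{4}_{0}$) meets $D_{i}$ in at most one point. Hence $m_{h^{\perp}_{D_{i}}}(U^{-})\leq 1$ for every region $U^{-}$ of $\Gamma_{-}$, so that summing over $i$ gives $m_{h^{\perp}_{\K^{-}}}(U^{-})\leq n$; by the symmetric argument applied to the 2-handle disks, $m_{h^{\perp}_{\K^{+}}}(U^{+})\leq n$. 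The additivity formula then yields $m_{h^{\perp}_{\K}}(U)\leq 2n$ on every region of $\Gamma$, so $\tr(h)\leq 2n$ and hence $\tr(\spin(K))\leq 2b(K)$. The main obstacle lies in the first step: confirming that the spin of an $n$-bridge knot admits a banded link whose base unlink has exactly $b(K)$ components. Geometrically this expresses that the $n$-bridge structure on $K$ lifts to a handle decomposition of $\spin(K)$ with $n$ 0-handles and $n$ 2-handles, and must be extracted carefully from the spin construction.
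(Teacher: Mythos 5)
Your strategy is genuinely different from the paper's (which puts $K$ in a $2k$-strand plat position, takes the Meier--Zupan banded diagram of Figure \ref{fig9}, and bounds every region's multiplicity by $2k$ via a horizontal path that crosses $\Gamma$ at most $k$ times together with Corollary \ref{cor3}), but as written it has two genuine gaps, and the second one is fatal to the argument in its present form. First, the entire construction rests on the unproved assertion that $\spin(K)$ admits a banded link $(L,b)$ with both $L$ and $L_b$ unlinks of exactly $n=b(K)$ components; you flag this yourself as ``the main obstacle'' and never extract it from the spin construction, so at best the proof is conditional on a statement that itself needs an argument of roughly the same difficulty as the theorem.

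Second, and more seriously, the per-disk bound $m_{h^{\perp}_{D_i}}(U^{-})\leq 1$ does not follow from putting $D_i$ ``into a standard cup form'' by a horizontal isotopy. A horizontal isotopy that does not alter $\Gamma$ cannot change any multiplicity: by the uniqueness of cappings \cite[Proposition 2.4]{MZ} and Lemma \ref{lemma4}, the multiplicities of $h^{\perp}_{\K^{\pm}}$ (and likewise of each individual capping disk) are completely determined by the diagram. Moreover, the bound is simply false for a general diagram of an unlink: if the component $\partial D_i$ has a diagram in $\Sigma$ that winds $w\geq 2$ times around some region (e.g.\ a coiled unknot diagram obtained from nested Reidemeister~I kinks), then the algebraic intersection number of $D_i$ with the fiber of $h^{\perp}$ over that region equals $w$, so $m_{h^{\perp}_{D_i}}\geq 2$ there no matter how the disk is isotoped. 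Hence your estimate only works if you additionally arrange a diagram in which both resolutions are $n$-component unlinks \emph{and} each component's individual diagram is winding-free in this sense -- but producing such a concrete diagram is exactly the content that is missing, and it is what the paper supplies instead via the explicit plat-position banded diagram, replacing your per-disk count by the adjacent-region estimate of Corollary \ref{cor3} along a short transverse path. To repair your proof you would need either to exhibit that diagram explicitly or to replace the ``cup form'' step by a diagram-independent bound on per-disk multiplicities, which does not exist.
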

\begin{proof} Suppose $K$ is a 1-knot with $b(K)=k$. Then $K$ may be given as the plat closure of a braid $\beta $ on $2k$ strands. In \cite{MZ}, the authors constructed a banded link diagram for $\spin (K)$ that is shown on the left of Figure \ref{fig9}. Denote by $\Gamma \in \mathcal{G}(\spin (K))$ the corresponding marked graph diagram of $\K $, depicted in the right of Figure \ref{fig9}. Let $\Sigma $ be the 2-sphere, containing the diagram $\Gamma $. 

\begin{figure}[h!]
\labellist
\normalsize \hair 2pt
\pinlabel $\overline{\beta }$ at 380 120
\pinlabel $\beta $ at 380 780 
\pinlabel $\overline{\beta }$ at 1360 120
\pinlabel $\beta $ at 1360 780 
\pinlabel $2$ at 1040 500
\pinlabel $2$ at 1220 300 
\pinlabel $2$ at 1220 630
\pinlabel $2$ at 1590 300 
\pinlabel $2$ at 1590 630
\pinlabel $0$ at 1240 500
\pinlabel $0$ at 1600 500
\endlabellist
\begin{center}
\includegraphics[scale=0.17]{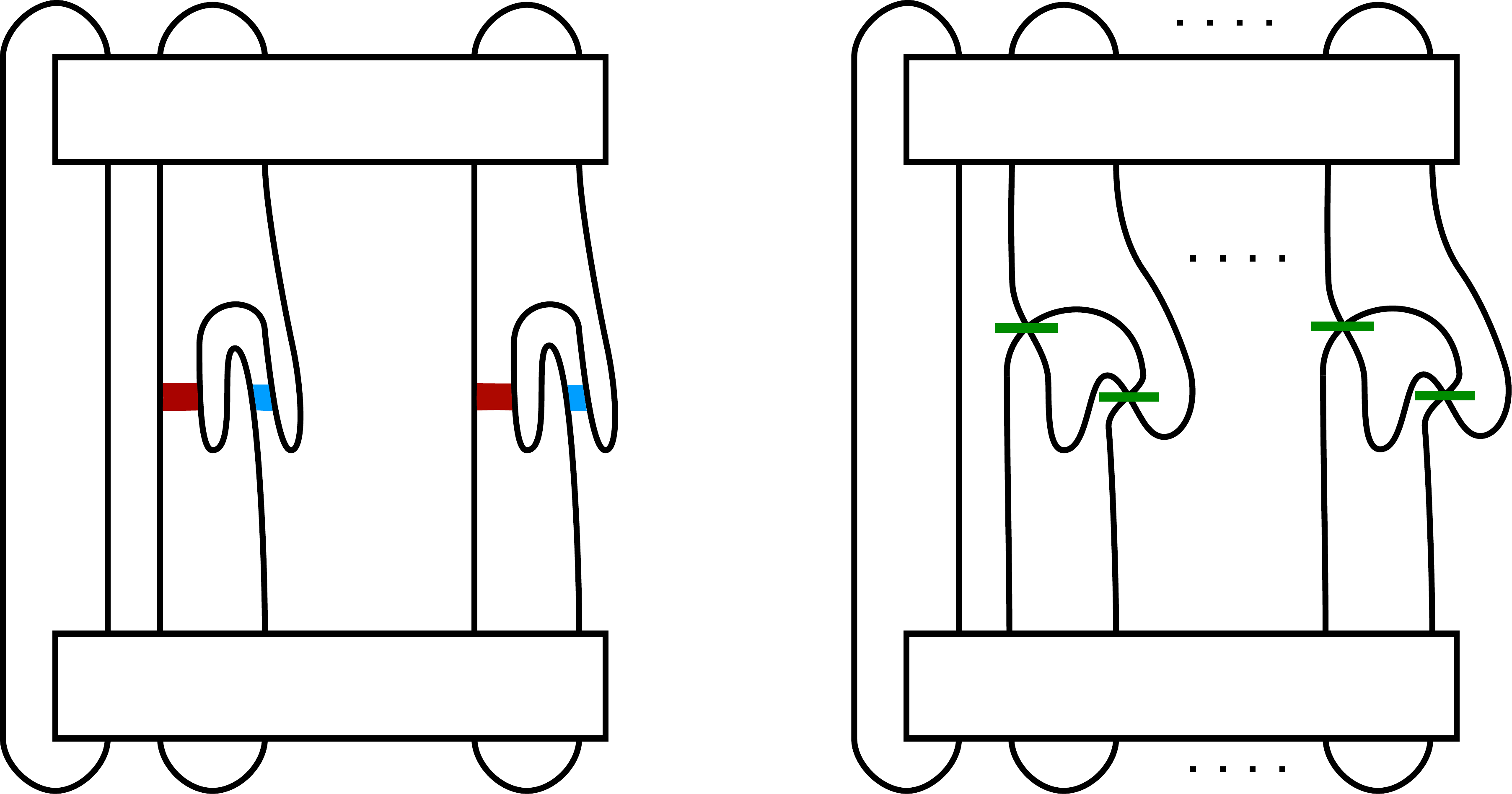}
\caption{A banded link diagram (left) and the marked graph diagram (right) of the spin $\spin (K)$, where $K$ is given as the plat closure of a braid $\beta $}
\label{fig9}
\end{center}
\end{figure}

Choose any region $U$ of the diagram $\Gamma $ where the flattening map $h^{\perp}_{\spin (K)}$ has nonzero multiplicity. Since $\beta $ is a braid on $2k$ strands, there exists a horizontal path $\alpha \colon I\to \Sigma $ from $x\in U$ to a point $y\in V$, where $V$ is a region of $\Gamma $ where $h^{\perp}_{\spin (K)}$ has multiplicity zero, so that $\alpha $ crosses $\Gamma $ at most $k$ times. By Corollary \ref{cor3}, the multiplicities of $h^{\perp}_{\spin (K)}$ in any two adjacent regions of $\Gamma $ differ by at most 2, therefore $m_{h^{\perp}_{\spin (K)}}(U)\leq 2k$. It follows that $\tr (\Gamma )\leq 2k$ and consequently $\tr (\spin (K))\leq 2k$. 
\end{proof}

\begin{corollary} \label{cor5} If $K$ is a 2-bridge knot, then $\tr (\mathcal{S}(K))=4$. 
\end{corollary}
\begin{proof} By \cite[Proposition 3.8]{TA}, an $n$-twist spun 2-bridge knot $\K $ has $w(\K )=4$ for any $n\neq \pm 1$. When $n=0$, Corollary \ref{cor2} implies that $\tr (\mathcal{S}(K))\geq w(\mathcal{S}(K))=4$ and by Theorem \ref{th3}, the equality follows. 
\end{proof}

Beside topological properties of flattenings, such as partition number and region multiplicities, one might consider their geometric properties, such as the shape of regions and the number of vertices of given type. For a flattening $h^{\perp}_{\K }\colon \K \to \Sigma $ corresponding to a marked graph diagram $\Gamma $, the closure of each region in $\Sigma $ is a curved edge polygon, whose vertices are the vertices of $\Gamma $. For any region $U$ of $\Gamma $, denote by $\widehat{U}=\bigcup _{U_i\sim U} U_i$ the subset of $\Sigma $ containing all regions equivalent to $U$, and let $s(U)$ denote the number of essential vertices in $\partial \widehat{U}$. 
Denoting by $U_{0},U_{1},\ldots U_{n}$ the regions of $\Gamma $ and by $\mathcal{I}=\left \{[i]|\, i\in \{0,1,\ldots ,n\}\right \}$ the set of equivalence classes of indices, we may define $$s(\Gamma )=\frac{\sum _{[i]\in \mathcal{I}}s(U_i)}{p(\Gamma )}\quad \textrm{and}\quad s(\K )=\min _{\Gamma \in \mathcal{G}(\K )}s(\Gamma )\;.$$ We call this invariant the \textbf{shape} of a knotted surface $\K $.  Shape distinguishes the unknotted orientable surfaces: 

\begin{proposition}\label{prop5} Let $\K $ be an orientable closed surface in $S^4$. Then $\K $ is unknotted if and only if $s(\K )=0$. 
\end{proposition}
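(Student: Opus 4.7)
My plan is to establish both directions by analyzing the marked graph diagrams that realize $s(h)=0$.

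For the forward implication, assume $\K$ is orientable and unknotted. If $\K\cong S^2$, take the hyperbolic splitting whose diagram is a single unknotted circle in $\Sigma$, which has no vertices. For genus $g\ge 1$, take the splitting $h\in\mathcal{H}(\K)$ associated to the standard diagram $\Gamma$ of Figure \ref{fig7}, which has $g+1$ circles joined by $2g$ marked vertices and no crossings. Each of these $2g$ marked vertices is a branch point of the flattening $h^{\perp}_\K$, hence inessential, so $\Gamma$ contains no essential vertices at all. Consequently every $s(U_i)=0$, giving $s(h)=0$ and therefore $s(\K)=0$.

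For the backward implication, suppose $s(\K)=0$ and fix $h\in\mathcal{H}(\K)$ with $s(h)=0$. Since $s(h)=\frac{1}{p(h)}\sum_{[i]\in\mathcal{I}}s(U_i)$ is a sum of nonnegative terms divided by a positive integer, each $s(U_i)$ must vanish. Every essential vertex of $\Gamma$ lies on the boundary of some region, and hence on the boundary of the corresponding $\widehat{U}_i$, so its existence would force $s(U_i)\ge 1$. We conclude that $\Gamma$ has no crossings at all and that every marked vertex of $\Gamma$ represents a branch point of the flattening $h^{\perp}_{\K}$.

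The main obstacle is now to extract from this combinatorial description that $\K$ embeds in $S^3$, since unknottedness will then follow from \cite[Theorem 1.2]{HK}. Because $\Gamma$ has no crossings, both resolutions $\Gamma_-$ and $\Gamma_+$ are crossing-free unlink diagrams of $\K_{-\epsilon}$ and $\K_{\epsilon}$, and each of their components bounds a planar disk in $\Sigma$. At every marked vertex the corresponding band sits in a small disk of $\Sigma$ by the branch-point local model of Figure \ref{fig14}, so the entire banded link $(L,b)$ associated to $\Gamma$ is realized inside $\Sigma\subset S^3_0$. Using \cite[Proposition 2.4]{MZ} to control the capping disks appearing in the construction of $\K(L,b)$ from Section \ref{sec2}, we push the 0-handles to one side of $\Sigma$ and the 2-handles to the other side inside $S^3_0$, thus producing an embedding of $\K$ into $S^3\subset S^4$ and proving $\K$ unknotted. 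The delicate point is verifying that the branch-point local models at each marked vertex actually assemble with the planar capping disks into a globally embedded (rather than merely immersed) surface; one has to check that the multiplicity pattern $(n,n+2,n,n+2)$ from Figure \ref{fig13} matches the two planar resolutions consistently, so that the two sets of capping disks can be chosen disjointly in a tubular neighborhood of $\Sigma$.
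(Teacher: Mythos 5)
Your proposal is correct and follows essentially the same route as the paper: the forward direction uses the standard diagram of Figure \ref{fig7} whose $2g$ marked vertices are all inessential, and the backward direction deduces from $s(h)=0$ that $\Gamma$ has no essential vertices, hence no crossings, so that the crossingless unlinks can be capped off by disks within a single equatorial $S^3$ (the paper does this in the section $S^{4}_{-\epsilon}$, you near $\Sigma$ in the $0$-section), with \cite[Proposition 2.4]{MZ} giving the isotopy to $\K$ and \cite[Theorem 1.2]{HK} giving unknottedness. The ``delicate point'' you flag is the standard fact that a crossingless unlink diagram together with flat bands in $\Sigma$ bounds disjoint disk systems on either side of $\Sigma$, which the paper likewise treats as immediate.
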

\begin{proof} If $\K $ is an unknotted surface of genus $g$, it admits a marked graph diagram $\Gamma $ in Figure \ref{fig7} with $2g$ marked vertices. Since all the vertices in this diagram are inessential, we have $s(\Gamma )=0$ and consequently $s(\K )=0$.  

Suppose $\K $ is a surface with $s(\K )=0$, then there exists a marked graph diagram $\Gamma \in \mathcal{G}(\K )$ with no essential vertices. It follows that $\Gamma $ has no crossings (all its vertices are saddles), thus $\K _{-\epsilon }$ and $\K _{\epsilon }$ are unlinks without crossings. In the section $S^{4}_{-\epsilon }$ of the hyperbolic decomposition corresponding to $\Gamma $, we may cap off the components of $\K _{-\epsilon }$ by disks, add the bands that correspond to 1-handles and obtain a surface whose boundary is an unlink without crossings (equivalent to $\K _{\epsilon }$), and may thus be capped off by disks inside the same section. We obtain a surface $\K ^{\prime}\subset S^{4}_{-\epsilon }$ that is isotopic to $\K $ by \cite[Proposition 2.4]{MZ}. Since $\K $ is isotopic to a surface inside $S^{3}$, it is unknotted.  
\end{proof}

\subsection{Flattenings of satellite 2-knots} \label{subs41}

Flattenings of surfaces are a useful tool for the study of satellite 2-knots. Let $\K _{P}$ be a 2-sphere embedded in $S^{2}\times D^{2}$, and let $\K _C$ be a 2-sphere embedded in $S^{4}$ with a tubular neighborhood $\nu (\K _C)$. If $f\colon S^{2}\times D^{2}\to \nu (\K _C)$ is a diffeomorphism, then $f(\K _P)$ is called a \textbf{satellite knot} with \textbf{pattern} $\K _P$ and \textbf{com\-pa\-nion} $\K _{C}$. We recall the construction of banded link diagrams of satellite knots, described in \cite{HKM}. \\

Suppose $\K $ is a satellite knot with pattern $\K _P$ and companion $\K _C$. View the 4-sphere as $S^{4}=S^{3}\times [-2,2]/(S^{3}\times \{-2\}, S^{3}\times \{2\})$, with a Morse function $h\colon S^{4}\to \RR $ that projects to the second factor. Choose an embedding of $\K _C$ for which $\K _{C}\cap S^{4}_{0}$ is a 1-knot, while the saddles of $\K _{C}$ lie in the sections $S^{4}_{\pm 1}$; this is the normal form of \cite{KSS}. 

For the ambient manifold of the pattern, we choose $V=S^{2}\times D^{2}\subset S^{4}$ which intersects the 0-section in a solid torus $W=V\cap S^{4}_{0}\cong S^{1}\times D^{2}$, while $V\cap S^{4}_{[-2,0]}=V\cap S^{4}_{[0,2]}\cong D^{2}\times D^{2}$. Draw a banded link diagram for $\K _{P}$ inside $W$ (that lies in the 0-section $S^{4}_{0}$). Choose a meridian disk $\mathcal{D}$ of $W$ that is disjoint from all bands in the diagram for $\K _P$; the number $\omega $ of (unsigned) intersection points $\K _P\cap \mathcal{D}$ is called the \textbf{geometric winding} of $\K _P$ (this number depends on the choice of $\mathcal{D}$).  

We draw a banded link diagram for $\K _C$ and move it by isotopy so that it lies inside $W'=\nu (\K _{C})\cap S^{4}_{0}$. Choose a meridian disk $\mathcal{D}'$ for $W'$ that intersects the banded link transversely in one point, then isotope the diffeomorphism $f\colon V\to \nu (\K _C)$ so that $f(W)=W'$ and $f(\mathcal{D}\times I)=W'\backslash (\mathcal{D}'\times I)$. A banded link diagram for $\K $ is obtained by drawing the 0-framed satellite of $\K _{P}\cap S^{4}_{0}\subset W$ around $\K _{C}\cap S^{4}_{0}$, attaching the bands corresponding to $\K _{P}$ and attaching $\omega $ copies of each band corresponding to $\K _{C}$ (the bands that lie below $S^{4}_{0}$ need to be pushed above, which is done by taking their dual bands). 

\begin{example} \label{ex3} Let $\K _{C}$ be the spin of the figure eight knot, whose marked graph diagram is given in the middle of Figure \ref{fig10}. Take a simple pattern $\K _{P}$ that winds around the 2-sphere $S^{2}\times \{0\}$ in $S^{2}\times D^{2}$ three times; its marked graph diagram is given on the left of Figure \ref{fig10}. Satellite $\K $ with pattern $\K _{P}$ and companion $\K _{C}$ admits a banded link diagram that is shown on the right of Figure \ref{fig11}. The corresponding marked graph diagram with multiplicities of $h^{\perp }_{\K }$ in most of its regions is shown in Figure \ref{fig12}. 
\end{example}

\begin{figure}[h!]
\labellist
\normalsize \hair 2pt
\pinlabel $\mathcal{D}$ at 680 400
\pinlabel $\K _{P}$ at 600 70
\pinlabel $\K _{C}$ at 1520 70
\pinlabel $2$ at 1190 100
\pinlabel $2$ at 1180 700
\pinlabel $0$ at 1040 400
\pinlabel $0$ at 1310 400
\pinlabel $2$ at 910 400
\pinlabel $2$ at 920 550
\pinlabel $2$ at 1460 400
\pinlabel $2$ at 1450 550
\pinlabel $4$ at 1400 610
\pinlabel $4$ at 970 610
\pinlabel $2$ at 1180 400
\pinlabel $W$ at 40 700
\endlabellist
\begin{center}
\includegraphics[scale=0.17]{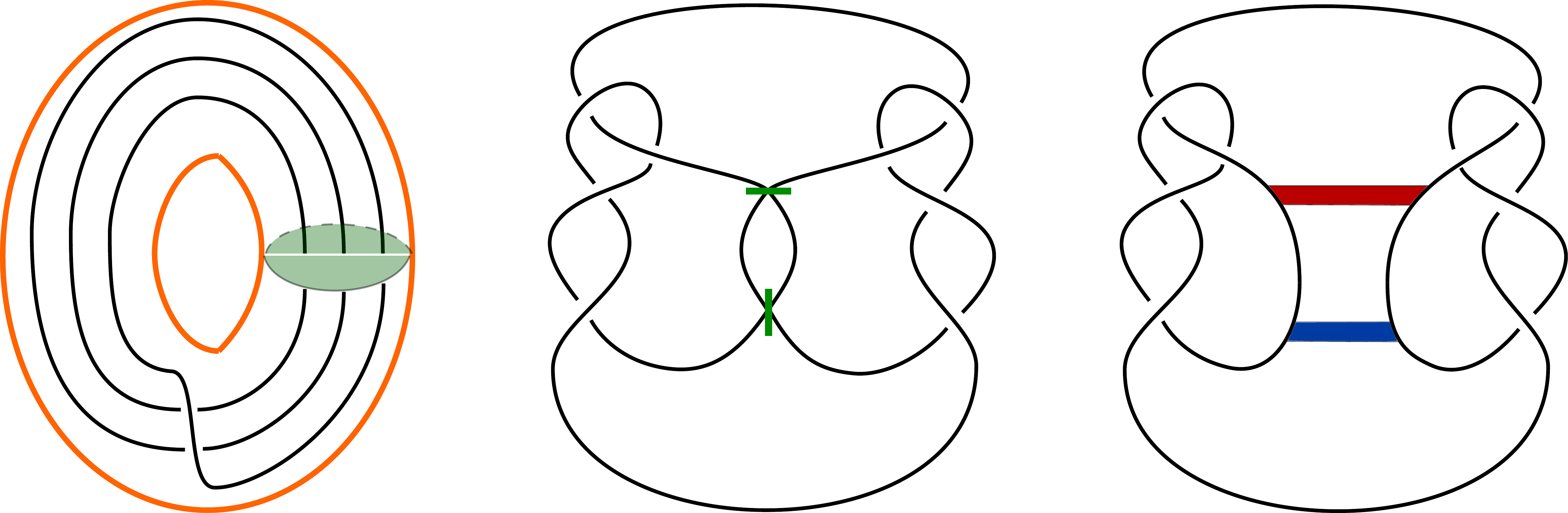}
\caption{Marked graph diagrams of the pattern $\K _P$ (left) and companion $\K _{C}$ (middle) of the satellite knot from Example \ref{ex3}. A link diagram of $\K _{C}$ in the normal form (right); the blue band lies below the 0-section and the red band lies above the 0-section.}
\label{fig10}
\end{center}
\end{figure}

\begin{figure}[h!]
\labellist
\normalsize \hair 2pt
\endlabellist
\begin{center}
\includegraphics[scale=0.18]{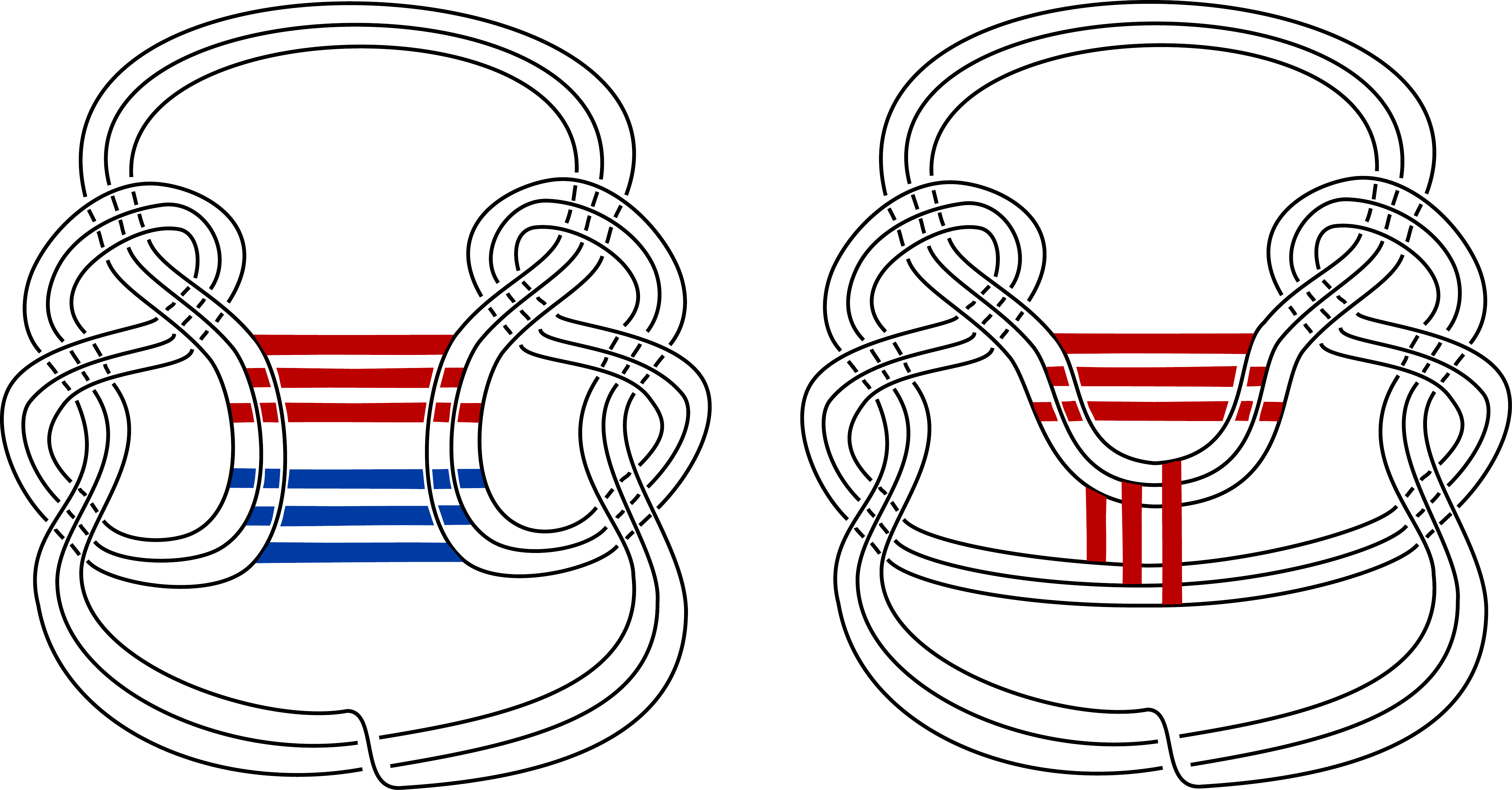}
\caption{A diagram in the normal form (left) and the corresponding banded link diagram (right) of the satellite knot $\K $ from Example \ref{ex3}. In the left diagram, the blue bands lie in $S^{4}_{-1}$, while the red bands lie in $S^{4}_{1}$. }
\label{fig11}
\end{center}
\end{figure}

The above example may lead to the following observations. 
Denote by $\Gamma $ (resp. $\Gamma _C$ and $\Gamma _{P}$) the marked graph diagrams of the satellite $\K $ (resp. companion $\K _C$ and pattern $\K _{P}$), described above. Let $h$ (resp. $h_C$ and $h_P$) be the hyperbolic decompositions of $\K $ (resp. $\K _C$ and $\K _P$), corresponding to these diagrams. The regions of $\Gamma $ consist of four different types:
\begin{enumerate}
\item regions in the complement of $f(W)$ correspond to the regions of $\Gamma _C$,
\item rectangular regions that correspond to the edges of $\Gamma _C$,
\item regions that correspond to the vertices of $\Gamma _C$,
\item regions that correspond to the regions of $\Gamma _P$.
\end{enumerate} Denote by $\omega $ the geometric winding of $\K _P$. Every region of $\Gamma _C$ induces one region of type (1), and every region of $\Gamma _P$ induces one region of type (4). There is only one region of $\Gamma $ that belongs to both type (1) and type (4); in the diagram on Figure \ref{fig12}, this is the lowest region with multiplicity $6$. Every edge of $\Gamma _C$ induces $(\omega -1)$ regions of type (2), and every vertex (either crossing or marked vertex) of $\Gamma _C$ induces $(\omega -1)^{2}$ regions of type (3). Every marked vertex of $\Gamma _C$ gives rise to $\omega $ marked vertices of $\Gamma $, and each of these vertices identifies two regions. It follows that $$p(\Gamma )=p(\Gamma _P)+p(\Gamma _C)-1+(\omega -1)e(\Gamma _C)+(\omega -1)^{2}\left (c(\Gamma _C)+v(\Gamma _C)\right )-(\omega -1)v(\Gamma _C)\;,$$ where $e(G)$ (resp. $c(G)$ and $v(G)$) denote the number of edges (resp. the number of crossings and marked vertices) of a marked graph $G$. Recall that the minimal number of vertices over all marked graph diagrams of a knotted surface $K$ is called \textbf{the \textit{ch}-index} of $K$ and denoted by $ch(K)$ \cite{YO}.  

Using the diagram of a satellite 2-knot, described above, we obtain an upper bound for its trunk. 

\begin{Theorem} Let $\K $ be a satellite 2-knot with companion $\K _C$ and pattern $\K _P$. Denote by $\omega $ the geometric winding of $\K _P$. Then 
\begin{xalignat}{1}\label{eq2}
\tr (\K )\leq \max \{\omega \tr (\K _C),\tr (\K _P)\}
\end{xalignat}
\end{Theorem}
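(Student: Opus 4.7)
The plan is to construct a specific hyperbolic splitting $h$ of the satellite $\K $ for which $\tr (h)\leq \max \{\omega \tr (\K _C),\tr (\K _P)\}$, and then invoke the definition of $\tr (\K )$ as a minimum. I begin by choosing hyperbolic splittings $h_C\in \mathcal{H}(\K _C)$ and $h_P\in \mathcal{H}(\K _P)$ that achieve $\tr (h_C)=\tr (\K _C)$ and $\tr (h_P)=\tr (\K _P)$, with corresponding marked graph diagrams $\Gamma _C$ and $\Gamma _P$. Applying the satellite construction recalled just before the theorem, I obtain a banded link diagram, and then a marked graph diagram $\Gamma $ for $\K $ together with a hyperbolic splitting $h\in \mathcal{H}(\K )$. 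The regions of $\Gamma $ fall into the four types (1)--(4) already listed.

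The core of the argument is a region-by-region bound on the multiplicity of $h^{\perp }_{\K }$. For a region $U$ of type (1), the 0-framed satellite construction replaces every strand of $\K _C\cap S^{4}_{0}$ with $\omega $ parallel strands, so over any point of $U$ the surface $\K $ has exactly $\omega $ sheets for each sheet of $\K _C$ over the corresponding region $U_C$ of $\Gamma _C$; hence $m_{h^{\perp }_{\K }}(U)=\omega \cdot m_{h^{\perp }_{\K _C}}(U_C)\leq \omega \tr (\K _C)$. For a region $U$ of type (4), the diffeomorphism $f$ identifies $\K \cap f(W)$ with $\K _P$ inside $W$, so only sheets of the pattern contribute and $m_{h^{\perp }_{\K }}(U)=m_{h^{\perp }_{\K _P}}(U_P)\leq \tr (\K _P)$.

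The remaining types are handled by a monotonicity argument built from Corollary \ref{cor3}. Let $U$ be a type (2) region lying between two consecutive parallel copies of a strand of $\Gamma _C$ coming from an edge $e$, and consider a transversal path in $\Sigma $ crossing all $\omega $ parallel strands and joining type (1) regions $U_L$ and $U_R$, whose multiplicities are $\omega m_L$ and $\omega m_R$ with $|m_L-m_R|=2$ by Corollary \ref{cor3} applied to $\K _C$. Along this path, each of the $\omega $ crossings changes $m_{h^{\perp }_{\K }}$ by $0$ or $\pm 2$; since the total change is $\pm 2\omega $ over exactly $\omega $ steps, every step must have the same sign and magnitude $2$. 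The sequence of multiplicities is therefore monotone from $\omega m_L$ to $\omega m_R$, so $m_{h^{\perp }_{\K }}(U)\leq \omega \max \{m_L,m_R\}\leq \omega \tr (\K _C)$. The same reasoning works for type (3) regions clustered around a vertex $v$ of $\Gamma _C$: the $\omega $ parallel strands in each of the two transverse directions create an $\omega \times \omega $ grid of intersections, and applying the monotone step argument row-wise and column-wise shows that each of the $(\omega -1)^{2}$ new multiplicities is bounded by $\omega \cdot \max \{m_{NW},m_{NE},m_{SW},m_{SE}\}\leq \omega \tr (\K _C)$. Combining the four estimates gives $\tr (h)\leq \max \{\omega \tr (\K _C),\tr (\K _P)\}$, hence $\tr (\K )\leq \tr (h)$ yields the stated bound.

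The main obstacle is the analysis of type (3) regions at marked vertices of $\Gamma _C$, where the companion's saddles sit and where the satellite inserts $\omega $ copies of the band. There the local layout of the $(\omega -1)^{2}$ new regions depends on both the companion's band and the $\omega $-fold pattern wrapping, and one must check that the two-dimensional monotone argument still forces every step to be $\pm 2$; this requires a case analysis distinguishing the three vertex types of Figure \ref{fig13} (fold crossing, branch point, cusp) and a verification that the identification between regions of $\Gamma $ and regions of $\Gamma _C$ and $\Gamma _P$ is the one implicit in the construction. The auxiliary claim that type (1) regions carry exactly $\omega $ times the companion multiplicity, while intuitively clear from the 0-framed satellite recipe, also deserves a short separate justification from the structure of the banded link diagram.
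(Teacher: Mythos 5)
Your overall route coincides with the paper's: build the satellite marked graph diagram $\Gamma $ from trunk-realizing diagrams $\Gamma _C$, $\Gamma _P$, sort its regions into the four types, show $m_{h^{\perp}_{\K }}=\omega \, m_{(h_C^{\perp})_{\K _C}}$ on type (1) regions and $m_{h^{\perp}_{\K }}=m_{(h_P^{\perp})_{\K _P}}$ on type (4) regions, bound the remaining regions, and minimize over splittings. The gap is in how you bound the type (2) and type (3) regions. Your forced-monotonicity argument rests on the claim that the two companion regions adjacent along an edge $e$ of $\Gamma _C$ have multiplicities with $|m_L-m_R|=2$ ``by Corollary \ref{cor3}''; but that corollary only gives a difference of $0$ or $2$, and the value $0$ genuinely occurs (for instance, two of the four regions around a cusp carry the same multiplicity $n$ and share an edge, see Figure \ref{fig13}). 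When $m_L=m_R$, the endpoints of your transversal path have equal multiplicities $\omega m_L=\omega m_R$, the total change over the $\omega $ crossings is $0$, and Corollary \ref{cor3} alone cannot exclude an intermediate rise of $+2$ per step and back down; if $m_L=\tr (\K _C)$, this is precisely the situation in which the desired bound could fail, so the case cannot be dismissed. What is actually needed (and what the paper uses) is the local structure of $\K $ over the $\omega $ parallel copies of $e$: the multiplicities in the intermediate regions interpolate between $\omega m_L$ and $\omega m_R$ because the sheets are peeled off one parallel copy at a time; this is geometric input about the satellite diagram, not a formal consequence of Corollary \ref{cor3}.

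The same difficulty is fatal for your treatment of type (3) regions: the proposed row-wise/column-wise monotonicity in the $\omega \times \omega $ grid is false when the vertex of $\Gamma _C$ is a branch point --- the paper's Figure \ref{fig15} (right, $\omega =4$) exhibits rows such as $6,8,6,4,2$, which increase and then decrease. The bound still holds there, but for a different reason, namely that at a branch point the largest multiplicity among the $(\omega -1)^{2}$ grid regions coincides with the largest multiplicity $\omega m$ of an adjacent type (1) region; this is exactly how the paper closes that case. Since you explicitly leave the marked-vertex/branch-point analysis as an unresolved ``obstacle'' and the mechanism you propose for it cannot work as stated, the crucial step of the proof is missing, even though the remaining ingredients (the $\omega $-scaling on type (1) regions, agreement with the pattern on type (4) regions, and the final passage from $\tr (h)$ to $\tr (\K )$) match the paper's argument.
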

\begin{proof}
Denote by $\Gamma $ (resp. $\Gamma _C$ and $\Gamma _{P}$) the marked graph diagrams of the satellite $\K $ (resp. companion knot $\K _C$ and pattern $\K _{P}$), obtained by the procedure, described above. Let $h$ (resp. $h_C$ and $h_P$) be the hyperbolic decompositions of $\K $ (resp. $\K _C$ and $\K _P$), corresponding to these diagrams. 

Recall the diffeomorphism $f\colon S^{2}\times D^{2}\to \nu (\K _C)$ maps the pattern $\K _P$ onto the satellite knot $\K $. Let $U$ be a region of $\Gamma $ of type (1) that corresponds to the region $U'$ of $\Gamma _C$, then the fiber of $h^{\perp}_{\K }$ above $U$ consists of $\omega $ copies of the fiber of $(h_{C}^{\perp})_{\K _C}$ over $U'$, and thus $m_{h^{\perp}_{\K }}(U)=\omega \, m_{(h^{\perp}_{C})_{\K _C}}(U')$. The multiplicity of $h^{\perp}_{\K }$ in a region of type (4) agrees with the multiplicity of $(h^{\perp}_{P})_{\K _P}$ in its corresponding region of $\Gamma _{P}$. 

Let $e$ be an edge of $\Gamma _C$ that separates two regions $U_1$ and $U_2$ of $\Gamma _C$. Then $e$ gives rise to $(\omega -1)$ regions of type (2), and multiplicities of $h^{\perp}_{K}$ in those regions interpolate between $\omega m_{(h^{\perp}_{C})_{\K _C}}(U_1)$ and $\omega m_{(h^{\perp}_{C})_{\K _C}}(U_2)$ (where multiplicity in each successive region jumps by 2). 

\begin{figure}[h!]
\labellist
\normalsize \hair 2pt
\pinlabel $2$ at 100 300
\pinlabel $4$ at 100 500
\pinlabel $2$ at 300 500
\pinlabel $0$ at 300 300
\pinlabel $8$ at 790 100
\pinlabel $6$ at 950 100
\pinlabel $4$ at 1100 100
\pinlabel $2$ at 1250 100
\pinlabel $0$ at 1400 100
\pinlabel $10$ at 780 260
\pinlabel $8$ at 950 260
\pinlabel $6$ at 1100 260
\pinlabel $4$ at 1250 260
\pinlabel $2$ at 1400 260
\pinlabel $12$ at 780 410
\pinlabel $10$ at 950 410
\pinlabel $8$ at 1100 410
\pinlabel $6$ at 1250 410
\pinlabel $4$ at 1400 410
\pinlabel $14$ at 780 560
\pinlabel $12$ at 950 560
\pinlabel $10$ at 1100 560
\pinlabel $8$ at 1250 560
\pinlabel $6$ at 1400 560
\pinlabel $16$ at 780 710
\pinlabel $14$ at 950 710
\pinlabel $12$ at 1100 710
\pinlabel $10$ at 1250 710
\pinlabel $8$ at 1400 710

\pinlabel $2$ at 2230 300
\pinlabel $0$ at 2230 500
\pinlabel $2$ at 2420 500
\pinlabel $0$ at 2420 300
\pinlabel $8$ at 2920 100
\pinlabel $6$ at 3070 100
\pinlabel $4$ at 3220 100
\pinlabel $2$ at 3370 100
\pinlabel $0$ at 3520 100
\pinlabel $6$ at 2920 260
\pinlabel $8$ at 3070 260
\pinlabel $6$ at 3220 260
\pinlabel $4$ at 3370 260
\pinlabel $2$ at 3520 260
\pinlabel $4$ at 2920 410
\pinlabel $6$ at 3070 410
\pinlabel $8$ at 3220 410
\pinlabel $6$ at 3370 410
\pinlabel $4$ at 3520 410
\pinlabel $2$ at 2920 560
\pinlabel $4$ at 3070 560
\pinlabel $6$ at 3220 560
\pinlabel $8$ at 3370 560
\pinlabel $6$ at 3520 560
\pinlabel $0$ at 2920 710
\pinlabel $2$ at 3070 710
\pinlabel $4$ at 3220 710
\pinlabel $6$ at 3370 710
\pinlabel $8$ at 3520 710
\endlabellist
\begin{center}
\includegraphics[scale=0.12]{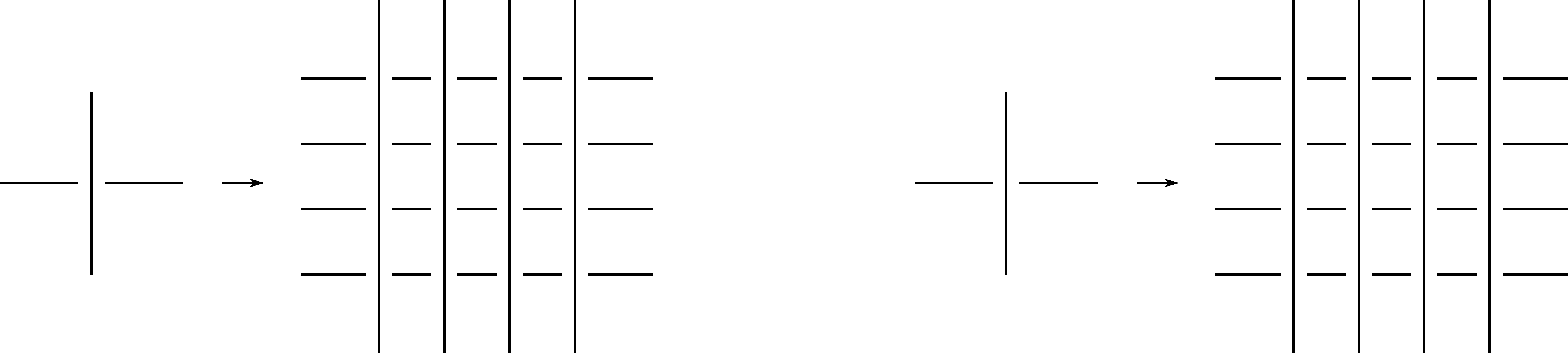}
\caption{A bunch of crossings of $\Gamma $, induced by a crossing $v$ of $\Gamma _C$: if $v$ is a fold crossing (left) and if $v$ is a branch point (right), in the  case $\omega =4$.}
\label{fig15}
\end{center}
\end{figure}

Any vertex $v$ of $\Gamma _C$ is incident to four edges of $\Gamma _C$ and gives rise to $(\omega -1)^{2}$ regions of type (3). The multiplicities of $h^{\perp}_{\K}$ in these regions depend on the type of the vertex $v$, see Figure \ref{fig15}. In case $v$ is a fold crossing, the multiplicities in the regions of type (3) interpolate between the multiplicities of $h^{\perp}_{\K}$ in the regions of type (2) corresponding to the edges incident at $v$ (where multiplicity in each successive region jumps by 2). In case $v$ is a branch point, the highest multiplicity of $h^{\perp}_{\K}$ in a region of type (3), corresponding to $v$, coincides with the highest multiplicity of $h^{\perp}_{\K}$ in a region of type (1) that comes from a region of $\Gamma _C$, incident to $v$. 

It follows from the above discussion that the highest multiplicity of $h^{\perp}_{\K }$ is either attained in a region of type (1) or in a region of type (4) and therefore $$\tr (\Gamma )=\max \{\omega \tr (\Gamma _C),\tr (\Gamma _P)\}\;.$$ By choosing the diagrams $\Gamma _C$ and $\Gamma _P$ so that $\tr (\Gamma _C)=\tr (\K _C)$ and $\tr (\Gamma _P)=\tr (\K _P)$, we obtain the desired inequality.   
\end{proof}

\begin{remark} If the geometric winding of $\K _P$ equals 1, then the satellite with pattern $\K _P$ and companion $\K _C$ is in fact the connected sum $\K _{P}\# \K _{C}$. In the special case when $\K _P$ is unknotted, we have $\K _{P}\# \K _{C}=\K _{C}$ and since $\tr (\K _C)\geq 2$, the inequality \eqref{eq2} becomes an equality. 
\end{remark}

 In a recent paper \cite{FH}, Freedman and Hillman use the satellite construction together with some intricate topological machinery to show that there exist $n$-dimensional knots in $\RR ^{n}$ of arbitrarily large width for each $n\geq 1$. Their definition of width is slightly different, but conceptually similar to our trunk invariant in dimension 2. In order to distinguish between Takeda's width and the width defined by Freedman and Hillman, we will denote the latter by $w_{FH}$.  

\begin{definition} \label{defF}\cite{FH} Given a smooth embedding $K\colon S^{2}\hookrightarrow \RR^4$, let $\pi \colon \RR ^{4}\to \RR ^{2}$ be any composition $\RR ^{4}\stackrel{d}{\rightarrow}\RR ^{4}\stackrel{p}{\rightarrow} \RR ^{2}$, where $d$ is any diffeomorphism and $p$ denotes the projection onto the last 2 coordinates. The \textbf{width} of $K$ is denoted by $w_{FH}(K)$ and defined as $$w_{FH}(K)=\min _{\pi }\left \{\max \left \{|K(S^{2})\cap \pi ^{-1}(p)|\, \colon \, p\in \RR ^{n}\textrm{ a regular value of the composition }\pi \circ K\right \}\right \}\;, $$ where the minimum is taken over all product projections $\pi $ specified above. 
\end{definition}

Comparing this definition with Takeda's definition of width (see Remark \ref{rem1} and Definition \ref{defT}), we may conclude the following: 

\begin{lemma} \label{lemma8} For any 2-knot $\K $, we have $$w_{FH}(\K )\leq w(\K )\leq \tr (\K )\;.$$
\end{lemma}
\begin{proof} Suppose $K\colon S^{2}\hookrightarrow \RR ^{4}$ is a smooth embedding. Any orthogonal projection $\pi \colon \RR ^{4}\to \RR ^{2}$ that is generic with respect to $K$, possibly precomposed by an isotopy of $K(S^{2})$, defines a product projection as specified in Definition \ref{defF}. Therefore $w_{FH}(\K )\leq w(\K )$, while the second inequality is provided by Corollary \ref{cor2}.
\end{proof}

Using the above relationship, we may establish the following. 

\begin{proposition} There exist knotted spheres in $S^{4}$ with arbitrarily large trunk. There exist knotted spheres for which the difference between the left-hand side and the right-hand side of the inequality \eqref{eq1} is arbitrarily large. 
\end{proposition}
\begin{proof} By \cite[Theorem 4]{FH}, there exist smooth knots $K\colon S^{2}\hookrightarrow \RR ^{4}$ with arbitrarily large width $w_{FH}(\K )$. It follows by Lemma \ref{lemma8} that the corresponding knots in $S^{4}$ have arbitrarily large trunk. To verify the second statement, suppose that a knotted sphere $\K $ has $\tr (\K )=d$ for some even $d\gg 2$. Then Corollary \ref{cor3} implies that for any marked graph diagram $\Gamma $ of $\K $, there exist regions $U_i$ of $\Sigma \backslash \Gamma $ with $m_{h_{\K }^{\perp }}(U_i)=2+2i$ for $i=1,2,\ldots ,\frac{d-4}{2}$. It follows that $\lay (\Gamma )\geq 2p(\Gamma )+\tr (\Gamma )-2+\frac{1}{4}(d^{2}-6d+8)$ for any marked graph diagram $\Gamma $. Choose $\Gamma $ for which $\lay (\Gamma )=\lay (\K )$ and we obtain $$\lay (\K )-\left (2p(\K )+\tr (\K )-2\right )\geq \lay (\Gamma )-\left (2p(\Gamma )+\tr (\Gamma )-2\right )\geq \frac{1}{4}\left (d^{2}-6d+8\right )\;.$$  
\end{proof}

The first example of a 2-knot in $S^4$ that becomes unknotted when connect summing with a standard real projective plane was found by Viro \cite{VI}. Using the satellite construction, Kim constructed another infinite family of such examples which are not ribbon 2-knots \cite{SK}. His examples, together with Freedman and Hillman's results, provide the following. 

\begin{Theorem} There exist knotted surfaces $\K _{1}$ and $\K _{2}$ in $S^{4}$ for which the difference between the right-hand side and the left-hand side of the inequality $$\tr (\K _{1}\# \K _{2})\leq \max \{\tr (\K _{1}),\tr (\K _{2})\}$$ from Proposition \ref{prop4} is arbitrarily large.   
\end{Theorem}
\begin{proof} In \cite{SK}, the author constructed an infinite number of satellite 2-knots which become unknotted by connected summing with a standard real projective plane $\PP ^{2}$. Specifically, these are obtained as $2n$-cables of the 2-twist spin of any 2-bridge knot. For the definition of twist-spinning, see Zeeman \cite{ZE}. Let $k$ be a 2-bridge knot, denote by $\tau _{2}(k)$ the 2-twist spin of $k$ and by $\K _{n}$ its $2n$-cable, where $n$ is a positive integer. By \cite{FH}, a 2-twist spin of a nontrivial classical knot has a positive homological width $w_{H}(\tau _{2}(k))$ and by \cite[Theorem 2]{FH}, its $2n$-cable has width $w_{FH}(\K _{n})\geq 2n w_{H}(\tau _{2}(k))\geq 2n$. It follows by Lemma \ref{lemma8} that $\tr (\K _{n})\geq 2n$. 

The standard real projective plane $\PP ^{2}$ admits a marked graph diagram $\Gamma $ with $\tr (\Gamma )=2$, see Figure \ref{fig4}. It follows that $\tr (\PP ^{2})\leq 2$ and since $w(\PP ^{2})=2$ by \cite{TA}, Co\-ro\-lla\-ry \ref{cor2} implies $\tr (\PP ^{2})=2$. By \cite[Theorem 2.9]{SK} we have $\K _{n}\# \PP ^{2}=\PP ^{2}$, therefore $\max \{\tr (\K _{n}),\tr (\PP ^{2})\}-\tr (\K _{n}\# \PP ^{2})=2n-2$. 
\end{proof}

\subsection{Directions for further study} \label{subs42}

\begin{enumerate}
\item The width of classical knots is closely related with the bridge number. Bridge decompositions of knotted surfaces were studied by Meier and Zupan, who obtained several results about the bridge number of surfaces inside the 4-sphere and in other 4-manifolds \cite{MZ,MZ1}. An important goal would be to understand the relationship between the bridge number, the layering and the partition number of an embedded surface. 

\item In the same direction, we would like to investigate the connection between the flattenings of surfaces and trisections of surfaces, presented in \cite{MZ,MZ1}. It is an interesting question whether some information, obtained by flattening a surface $\K $, could actually be read from a suitable trisection diagram of $\K $. 

\item It might be fruitful to explore the shape of regions in a surface diagram. Are there typical shape structures occuring in the diagrams of some families of knotted surfaces? What does the shape of regions in a diagram of a surface $\K $ tell us about its properties? What can we learn by investigating the values of the shape invariant $s(\K )$?

\item Several results concerning the bridge number and width of classical satellite knots have been established \cite{SC1,SC2,ZU}. As we demonstrate in Subsection \ref{subs41}, fla\-tte\-nings offer a suitable way to study the layering and other invariants of satellite 2-knots yet unexplored. 

\item Our construction could be generalized to surfaces in an arbitrary four manifold. Flattenings thus obtained could increase our means of presenting and understanding embedded surfaces. 

\end{enumerate}

\begin{figure}[h!]
\labellist
\normalsize \hair 2pt
\pinlabel $6$ at 540 240
\pinlabel $2$ at 380 48
\pinlabel $4$ at 380 98
\pinlabel $2$ at 720 36
\pinlabel $4$ at 720 86
\pinlabel $6$ at 530 930
\pinlabel $4$ at 530 1134
\pinlabel $2$ at 530 1166
\pinlabel $0$ at 320 500
\pinlabel $0$ at 720 500
\pinlabel $4$ at 350 330
\pinlabel $4$ at 710 330
\pinlabel $2$ at 350 380
\pinlabel $2$ at 710 380
\pinlabel $6$ at 100 520
\pinlabel $6$ at 160 730
\pinlabel $6$ at 960 540
\pinlabel $6$ at 900 740
\pinlabel $12$ at 800 810
\pinlabel $12$ at 270 810
\pinlabel $2$ at 22 520
\pinlabel $4$ at 60 520
\pinlabel $4$ at 144 520
\pinlabel $2$ at 174 520
\pinlabel $2$ at 1048 550
\pinlabel $4$ at 1010 550
\pinlabel $2$ at 55 730
\pinlabel $4$ at 90 730
\pinlabel $2$ at 1010 760
\pinlabel $4$ at 970 760
\pinlabel $4$ at 924 530
\pinlabel $2$ at 890 520
\pinlabel $4$ at 200 678
\pinlabel $2$ at 204 643
\pinlabel $4$ at 850 688
\pinlabel $2$ at 855 655
\pinlabel $4$ at 680 694
\pinlabel $2$ at 703 655
\pinlabel $4$ at 410 680
\pinlabel $2$ at 420 630
\pinlabel $8$ at 220 755
\pinlabel $10$ at 240 780
\pinlabel $10$ at 312 800
\pinlabel $8$ at 346 800
\pinlabel $8$ at 720 800
\pinlabel $10$ at 756 800
\pinlabel $4$ at 156 870
\pinlabel $6$ at 166 836
\pinlabel $6$ at 187 884
\pinlabel $8$ at 197 848
\pinlabel $6$ at 876 894
\pinlabel $8$ at 866 854
\pinlabel $4$ at 910 884
\pinlabel $6$ at 897 844
\pinlabel $10$ at 838 800
\pinlabel $8$ at 860 773
\pinlabel $6$ at 540 470
\pinlabel $6$ at 540 640
\pinlabel $2$ at 482 530
\pinlabel $2$ at 597 530
\pinlabel $4$ at 515 500
\pinlabel $4$ at 565 500
\pinlabel $4$ at 94 640
\pinlabel $6$ at 111 616
\pinlabel $6$ at 130 655
\pinlabel $4$ at 142 627
\pinlabel $4$ at 972 656
\pinlabel $6$ at 940 664
\pinlabel $4$ at 926 638
\pinlabel $6$ at 954 628
\pinlabel $4$ at 960 420
\pinlabel $6$ at 946 455
\pinlabel $6$ at 938 397
\pinlabel $4$ at 923 430
\pinlabel $4$ at 132 406
\pinlabel $6$ at 116 430
\pinlabel $6$ at 118 377
\pinlabel $4$ at 97 400
\pinlabel $6$ at 266 714
\pinlabel $4$ at 295 695
\pinlabel $8$ at 291 738
\pinlabel $6$ at 315 721
\pinlabel $6$ at 753 730
\pinlabel $4$ at 776 700
\pinlabel $8$ at 780 748
\pinlabel $6$ at 804 721
\endlabellist
\begin{center}
\includegraphics[scale=0.43]{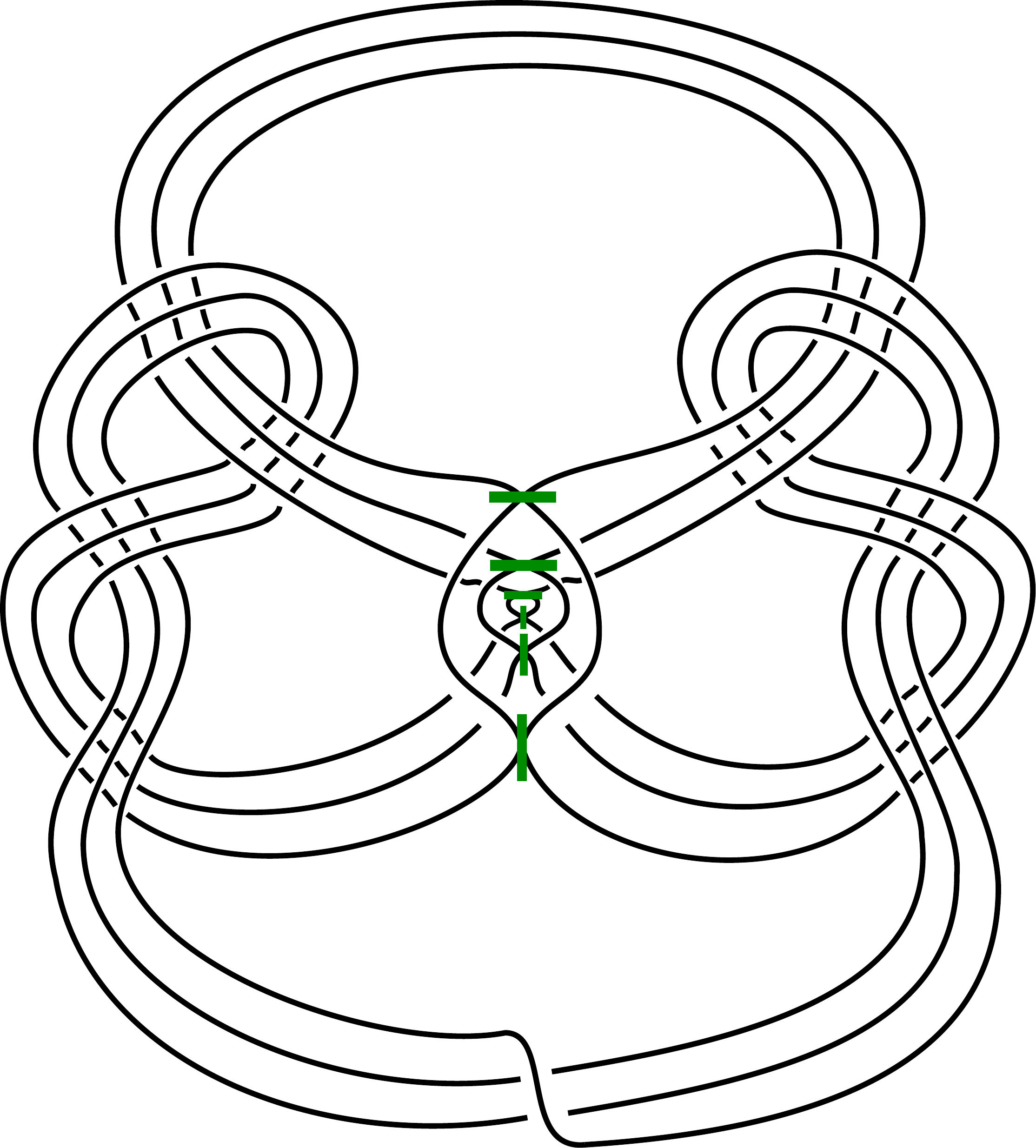}
\caption{A marked graph diagram of the satellite knot $\K $ from Example \ref{ex3}.}
\label{fig12}
\end{center}
\end{figure}

\section*{Data availability statement}

Data sharing is not applicable.

\end{document}